\documentclass[a4paper,10pt,notitlepage,reqno]{article}
\usepackage[english]{babel}
\usepackage[latin1]{inputenc}
\usepackage[T1]{fontenc}
\usepackage{amssymb,amsthm,amsmath} 
\usepackage{mathtools}
\usepackage{mathrsfs}
\usepackage{enumerate}
\usepackage{authblk}
\usepackage{color}
\usepackage{geometry}
\geometry{top=2.5cm,bottom=3.5cm,left=2.25cm,
	right=2.25cm,heightrounded}
\usepackage{hyperref}
\linespread{1}

\newcommand{\NN}{\mathbb{N}}
\newcommand{\RR}{{\mathbb{R}}}
\newcommand{\CC}{{\mathbb{C}}}

\newcommand{\TT}{{\mathbb{T}}}

\newcommand{\cA}{{\mathcal A}}

\newcommand{\cF}{{\mathcal{F}}}
\newcommand{\cH}{{\mathcal{H}}}

\newcommand{\Dom}{{\operatorname{Dom}}}

\newcommand{\cf}{\emph{cf.}}

\newcommand{\eps}{\varepsilon}

\newcommand{\ov}{\overline}

\newcommand\la{\lambda}

\renewcommand{\d}{{\textrm{d}}}

\renewcommand\eps{\varepsilon}

\newcommand\sd{\sigma_{\rm disc}}
\newcommand\sess{\sigma_{\rm ess}}

\newcommand\mydot{\,\cdot\,}
\newcommand\ds{\displaystyle}

\newcommand\wh{\widehat}


\theoremstyle{plain} 
\newtheorem{theorem}{Theorem}[section]
\newtheorem{lemma}{Lemma}[section]

\newtheorem{corollary}{Corollary}[section]

\theoremstyle{definition}

\theoremstyle{remark}
\newtheorem{remark}{Remark}[section]

\newcommand{\normeq}[1]{
	{\left\vert\kern-0.25ex\left\vert\kern-0.25ex\left\vert #1 
	\right\vert\kern-0.25ex\right\vert\kern-0.25ex\right\vert}}

{\left\lbrace\begin{array}{r@{\hspace{1mm}}ll}}%
	{\end{array}\right.}

\title{\textbf{Spectral analysis of the spin-boson Hamiltonian 
		\\with two bosons for arbitrary coupling and \\bounded 
		dispersion relation}}

\author{Orif \,O.\ Ibrogimov} 
\affil{Department of Mathematics, Faculty of Nuclear Sciences 
	and Physical Engineering, Czech Technical University,
	Trojanova 13, 12000 Prague 2, Czech Republic; 
	ibrogori@fjfi.cvut.cz}
\begin{document}
\date{\small 29 January 2019}
\maketitle
\begin{abstract}
\noindent
We study the spectrum of the spin-boson Hamiltonian with 
two bosons for arbitrary coupling $\alpha\!>\!0$~in the case 
when the dispersion relation of the free field is a bounded 
function. We derive an explicit description of the essential 
spectrum which consists of the so-called two and 
three-particle branches that can be separated by a gap if 
the coupling is sufficiently large. It turns out, that 
depending on the location of the coupling constant and the 
energy level of the atom (with respect to certain constants
depending on the maximal and the minimal values of the boson 
energy) as well as the validity or the 
violation of the infrared regularity type conditions, the 
essential spectrum is either a single finite interval or a 
disjoint union of at most six finite intervals. 
The corresponding critical values of the coupling constant 
are determined explicitly and the asymptotic lengths of 
the possible gaps are given when $\alpha$ 
approaches to the respective critical value. Under minimal 
smoothness and regularity conditions on the boson 
dispersion relation and the coupling function, we show that 
discrete eigenvalues can never accumulate at the edges of 
the two-particle branch. Moreover, we show the absence of 
the discrete eigenvalue accumulation at the edges of the 
three-particle branch in the infrared regular case.
\end{abstract}
\footnotetext{\emph{Keywords}. Spin-boson Hamiltonian, 
Fock space, discrete/essential spectrum, 
Schur complement, 
Weyl inequality.}
\footnotetext{\emph{2010 Mathematics Subject Classification}. 
81Q10, 47A10, 70F07, 47G10.}
%
%
%
\section{Introduction}\label{sec:intro}
%
The spin-boson Hamiltonian is the energy operator of a quantum 
mechanical model describing a two-level system which is linearly 
coupled to the quantized field of bosons. It is formally given 
by the expression
\begin{equation}\label{expression:full.sb.ham}
	H=\eps\sigma_z\otimes\mathbb{I}+\mathbb{I}\otimes
	\int_{\RR^d}\omega(k)a^*(k)a(k)\,\d k + 
	\alpha\sigma_x\otimes\int_{\RR^d}(\la(k)a^*(k)
		+\ov{\la(k)}a(k))\,\d k
\end{equation}
and acts on the Hilbert space 
\begin{equation}
	\mathscr{H}=\CC^2\otimes\cF_s\bigl(L^2(\RR^d)\bigr),
\end{equation}  
where $\cF_s\bigl(L^2(\RR^d)\bigr)$ is the symmetric 
(bosonic) Fock space over $\RR^d$. In 
\eqref{expression:full.sb.ham}, $a^*(k)$ and $a(k)$ are the 
boson creation and annihilation operators, 
\begin{equation*}
\sigma_z=\begin{pmatrix}
1 & 0\\
0 & -1
\end{pmatrix}, 
\quad 
\sigma_x=\begin{pmatrix}
0 & 1\\
1 & 0
\end{pmatrix}
\end{equation*}
are the Pauli (spin) matrices, the constants $\eps>0$ and $-\eps$ 
are the energy levels of the atom corresponding to its excited and 
ground states, respectively, $\omega(k)$ is the boson dispersion, 
$\alpha>0$ is the coupling constant and $\la(k)$ is the coupling 
function.

For the photon case, spectral 
and scattering properties of the full spin-boson Hamiltonian as 
well as of its finite photon approximations have been investigated 
extensively and as a by-product sophisticated techniques have been 
developed. The corresponding literature is enormous and we limit 
ourselves to citing \cite{Spohn-CMP-1989,Zhukov-Milnos-1995, 
	Huebner-Spohn-1995, Minlos-Spohn-1996, Gerard-1996,  
	Arai-Hirokawa-1997, deMonvel-Sahbani-1998, Skibsted-1998, 
	Bach-Froechlich-Sigal-Adv.Math1998, Arai-2000, DerJak-JFA2001, 
	Hirokawa-2001RMP, Galtbayar-Jensen-Yajima-2003, 
	Angelescu-Minlos-Ruiz-Zagrebnov-2008, 	Abdesselam-2011, 
	Hasler-Herbst-2011, DeRoeck-Griesemer-Kupianinen-2015, 
	Merkli-CMP2015, 
	Bach-Ballesteros-Koenenberg-Menrath-2017, 
	Braunlich-Hasler-Lange-2018} (and the related work 
	\cite{Moeller-2005, Miyao-2009, Moller-Rasmussen-2013, 
		MNR-2016-1D}).
In these studies, the dispersion of the free field is taken 
to be either the relativistic dispersion 
$\omega(k)=\sqrt{k^2+m^2}$ or its limiting cases $\omega(k)=|k|$, 
$\omega(k)=\frac{k^2}{2m}$, and sometimes as a general unbounded 
and almost everywhere continuous function preserving all the 
features of these physical photon dispersion relations. 

In general, the dispersion relation $\omega\colon\RR^d\to[0,\infty)$ 
and the coupling function $\la\colon\RR^d\to\CC$ are fixed by the 
physics of the problem. In view of different applications of 
the spin-boson Hamiltonian, one likes 
to consider them as free parameter functions and impose 
only some general conditions such as 
\begin{equation}\label{cond:la.la.omega.in.L2}
	\la\in L^2(\RR^d), \quad 
		\frac{\la}{\sqrt{\omega-\omega_0}}\in L^2(\RR^d),
\end{equation}
where
\begin{equation}\label{def:omega0}
	\omega_0:=\inf_{k\in\RR^d}\omega(k).
\end{equation}
We note that the first condition in \eqref{cond:la.la.omega.in.L2} 
is the minimal assumption on the coupling function from the spectral 
theory viewpoint as it is needed to guarantee
the closability of the energy operator. To the best of our knowledge, 
almost every study on the spectrum of the spin-boson model in 
the up-to-date literature assumes at least \eqref{cond:la.la.omega.in.L2} 
or its strengthened version where the second condition in 
\eqref{cond:la.la.omega.in.L2} is replaced by  
\begin{equation}\label{cond2:la.omega.in.L2}
	\frac{\la}{\omega-\omega_0}\in L^2(\RR^d),
\end{equation}
which is known as the \emph{infrared regularity condition} 
(\cf~\cite{Hirokawa-2001RMP, Davies1981_SymmetryBreaking}). 
The recent work \cite{Ibrogimov-AHP2018} studies the problem 
of the explicit description of the essential spectrum and the 
finiteness of the discrete spectrum for the spin-boson model with 
two photons \emph{for arbitrary coupling} where the only requirement 
on the coupling function was its square integrability, whereas the 
photon dispersion relation was assumed to be a continuous 
(almost everywhere) and \emph{unbounded} function.

Motivated by solid state physics applications of the spin-boson 
Hamiltonian\footnote{We note that one has $\omega(k)\equiv1$ for 
	the (original) Fr\"ohlich polaron, \cf~\cite{H.Froehlich1954, 
		Spohn1988:large.polaron}} and related 
	finite volume approximation problems (\cf~\cite{Arai-Hirokawa-1997}),
in this paper we undertake the spectral analysis of the spin-boson 
Hamiltonian with two bosons for \emph{arbitrary coupling} and 
\emph{bounded} dispersion relation. The corresponding truncated 
spin-boson Hamiltonian is obtained from the full spin-boson 
Hamiltonian by the compression onto the subspace of two bosons which is 
given by the tensor product of $\CC^2$ and the truncated Fock space
\begin{equation}
\cF_s^2:=\CC \oplus L^2(\RR^d) \oplus L^2_s(\RR^d\times\RR^d).
\end{equation}
Here $L^2_s(\RR^d\times\RR^d)$ stands for the subspace of the 
Hilbert space $L^2(\RR^d\times\RR^d)$ consisting of symmetric 
functions and equipped with 
the $1/2$-inner product of the latter. For 
$f=\bigl(f^{(\sigma)}_0, f^{(\sigma)}_1, f^{(\sigma)}_2\bigr)\in 
	\CC^2\otimes\cF_s^2$, where $\sigma=\pm$ is a discrete variable, 
the Hamiltonian of our system is given by the formal expression 
\begin{equation}\label{Hamiltonian}
\begin{aligned}
(H_{\alpha}f)^{(\sigma)}_0 &= \sigma\eps f^{(\sigma)}_0 + 
	\alpha\int_{\RR^d} \la(q)f^{(-\sigma)}_1\!(q) \,\d q,\\
(H_{\alpha}f)^{(\sigma)}_1\!(k) &=  (\sigma\eps+\omega(k))f^{(\sigma)}_1\!(k)+
	\alpha\la(k)f^{(-\sigma)}_0+
		\alpha\int_{\RR^d}f^{(-\sigma)}_2\!(k,q)\la(q) \,\d q,\\
(H_{\alpha}f)^{(\sigma)}_2\!(k_1,k_2) &= (\sigma\eps+\omega(k_1)+
	\omega(k_2))f^{(\sigma)}_2\!(k_1,k_2)+
	\alpha\la(k_1)f^{(-\sigma)}_1\!(k_2)+
		\alpha\la(k_2)f^{(-\sigma)}_1\!(k_1). 
\end{aligned}
\end{equation}
Throughout the paper
we assume 
that the parameter $\eps>0$ is fixed, the dispersion 
relation $\omega\geq0$ is a non-constant, bounded, continuous 
function (although almost everywhere continuity will be enough 
in our analysis) and the coupling function $\la$ is not 
identically zero 
and $\la\in L^2(\RR^d)$. 
If 
$\la$ 
is identically zero on $\RR^d$, 
then the bosons do not couple to the atom and the description 
of the spectrum becomes straightforward. The case of constant 
dispersion relation was considered in 
\cite{Ibrogimov-Froehlich.polaron.2018} where it was possible 
to make the spectral analysis very explicit. The spatial dimension, 
$d\geq1$, plays no particular role in our analysis and is left 
arbitrary. We make the notation
\begin{equation}\label{def:omega1}
\omega_1:=\sup_{k\in\RR^d}\omega(k).
\end{equation}

The goal of this paper is to give an explicit description of 
the essential spectrum, analyse its structure and study the 
finiteness of the discrete eigenvalues in the gaps as well as 
outside of the essential spectrum for all values of the coupling 
constant. Our strategy is based on reducing the problem to the 
spectral analysis of 
a family of self-adjoint $2\times2$ operator matrices 
\begin{equation}\label{reduced.op.mat.}
\cA:=\begin{pmatrix}
A & B \\[1ex]
C & D
\end{pmatrix}
\end{equation}
depending on two parameters $\gamma\in\RR$ and $\alpha>0$ which 
act on the Hilbert space 
\begin{equation}
\cH:=L^2(\RR^d) \oplus L^2_s(\RR^d\times\RR^d).
\end{equation} 
For $f\in L^2(\RR^d)$ and $g\in L^2_s(\RR^d\times\RR^d)$, the operator 
entries of \eqref{reduced.op.mat.} are formally defined by the relations
\begin{equation}\label{op.entr.op.mat.A}
\begin{aligned}
&(Af)(k) = (-\gamma+\omega(k))f(k), 
\quad 
&&(Bg)(k)= \alpha\int g(k,q)\la(q)\,\d q,
\\
&(Cf)(k,q)=\alpha\ov{\la(k)}f(q)+\alpha\ov{\la(q)}f(k), 
\quad 
&&(Dg)(k,q)= (\gamma+\omega(k)+\omega(q))g(k,q). 
\end{aligned}
\end{equation}
The methods we employ to achieve the aforementioned results are direct and 
quite simple. We particularly benefit from block operator matrix techniques 
involving Schur complements and the corresponding Frobenius-Schur 
factorizations combined with the standard perturbation theory.  
It turns out that the essential spectrum of the reduced operator matrix $\cA$ 
consists of the union of the interval $[2\omega_0+\gamma, 2\omega_1+\gamma]$ 
(i.e.~the spectrum of~$D$) with an additional branch corresponding to 
the essential spectrum of a non-linear pencil of multiplication operators 
in $L^2(\RR^d)$ which represents the Schur complement of $D-z$ in the 
operator matrix $\cA-z$ in the Calkin algebra (Theorem~\ref{thm:ess.spec.A}). 
Depending on the location of the parameters $\alpha>0$ and $\eps$ 
(with respect to certain constants
formed out of $\omega_0$ and $\omega_1$) as well as the validity or the 
violation of the second condition in~\eqref{cond:la.la.omega.in.L2} and 
its natural counterpart
\begin{equation}
\frac{\la}{\sqrt{\omega_1-\omega}}\in L^2(\RR^d),
\end{equation}
this additional branch consists of a single finite interval which 
can be in either side of $[2\omega_0+\gamma, 2\omega_1+\gamma]$, or 
two finite intervals, one in each side of the latter. The corresponding 
critical values of the coupling constant, the transitions at which open a 
new gap in the essential spectrum, are found explicitly and the asymptotic 
lengths of the possible gaps are determined when $\alpha$ approaches to 
the respective critical value (see~Theorem~\ref{thm:asymp.length.og.gaps}). 
Inspired by the splitting trick we have developed in the recent work 
\cite{Ibrogimov-AHP2018}, it is shown that no edge of the essential 
spectrum other than $2\omega_0+\gamma$ and $2\omega_1+\gamma$ can be an 
accumulation point of the discrete spectrum (Theorem~\ref{thm:disc.spec.1}). 
The absence of the discrete eigenvalue accumulation at the points 
$2\omega_0+\gamma$ and $2\omega_1+\gamma$ can also be guaranteed 
whenever the infrared regularity condition \eqref{infrared.reg.conditions} 
and its natural counterpart
\begin{equation}
\frac{\la}{\omega_1-\omega}\in L^2(\RR^d)
\end{equation}
(or their weaker version, in the case when these points are not the top or 
the bottom of the essential spectrum) are satisfied 
(Theorem~\ref{thm:disc.spec.2}). Finally, in 
Theorem~\ref{thm:spin-boson-spec}, we deduce the spectral properties of the 
Hamiltonian~\eqref{Hamiltonian} from the spectral 
information for the operator matrix $\cA$ with $\gamma=\eps$ and 
$\gamma=-\eps$.

In spite of being self-adjoint and bounded, the operator matrix 
$\cA$ in \eqref{op.entr.op.mat.A} is, up to our knowledge, not 
covered by any of the currently existing abstract results such as 
\cite{ALMS94, AMS-1998, MR2363469, Kraus-Langer-Tretter-2004, 
	Langer-Langer-Tretter-2002} and requires an individual analysis. 
This is mainly due to the non-compactness of partial-integral 
operators and the fact that both diagonal entries have empty 
discrete spectrum. Although in our setting the underlying domain 
is the whole Euclidean space $\RR^d$, our results can be 
trivially translated into the setting with bounded underlying 
domains $\Omega\subset\RR^d$ such as the torus $\TT^d$. 
To achieve this, it suffices to replace the parameter functions 
$\omega$ and $\la$ with their cut-offs $\chi_{\Omega}\omega$ 
and $\chi_{\Omega}\la$. In this sense our results generalize 
and clarify the main results of \cite{MNR-2016-1D, Rasulov-2016TMF} 
in several respects. The splitting trick that we employ to show the 
absence of the 
eigenvalue accumulation at the edges of the two-particle branch 
(Theorem~\ref{thm:disc.spec.1}) is a modification of the similar 
trick developed in our recent work \cite{Ibrogimov-AHP2018}. 
Though inspired by it, our splitting trick is quite different from 
the well known splitting trick of  
Schr\"odinger operator theory (\cf~\cite{Seto-1974, Klaus-1977, 
	Newton-1983, Duclos-Exner-1995, Abdullaev-Ikromov-2007, 
	Ikromov-Sharipov-FAA-1998}). It is based on a 
``special decomposition'' of a ``special portion'' of the 
Birman-Schwinger type kernel. Unlike our approach, the 
classical splitting trick in our setting would require some 
additional regularity on the parameter functions. 

Throughout the paper we adopt the following notations. For a 
self-adjoint operator $T$ acting in a Hilbert space and an 
interval $(a, b)\subset\RR\setminus\sess(T)$, we denote by 
$N((a,b); T)$ the dimension of the spectral subspace of $T$ 
corresponding to the interval $(a,b)$ which coincides with 
the number of the discrete eigenvalues of $T$ in the interval 
$(a,b)$ (counted with multiplicities). The integrals with no 
indication of the limits imply the integration over the whole
space $\RR^d$ or $\RR^d\times\RR^d$. 
%
%
\section{Spectral properties of the reduced operator matrix}
%
\subsection{Preliminaries}
%
Since $\la\in L^2(\RR^d)$, it follows that 
$B\colon L^2_s(\RR^d\times\RR^d)\to L^2(\RR^d)$ is an everywhere 
defined bounded operator with the adjoint $B^*=C$. Hence, the operator 
matrix $\cA:\cH\to\cH$ is everywhere defined and self-adjoint 
(\cf~\cite[Theorem~V.4.3]{Kato}).
Note that 
$D:L^2_s(\RR^d\times\RR^d)\to L^2_s(\RR^d\times\RR^d)$
is an everywhere defined self-adjoint operator with the 
spectrum
\begin{equation}
\sigma(D)=[2\omega_0+\gamma, 2\omega_1+\gamma],
\end{equation}  
where $\omega_0$ and $\omega_1$ are defined by 
\eqref{def:omega0} and \eqref{def:omega1}.

As it was mentioned in Introduction, our approach to study 
the spectrum of the $2\times2$ operator matrix $\cA$ is 
based on the so-called Schur complements and the 
corresponding Frobenius-Schur factorization, 
\cf~\cite{Tre08}. For $z\in\rho(D)$, the Schur complement 
of $D-z$ in the operator matrix $\cA-z$ is given by  
\begin{equation*}
	S(z) := A-z-B(D-z)^{-1}B^*, \quad 
	\Dom(S(z)) := L^2(\RR).
\end{equation*}
Simple calculations yield the representation
\begin{equation}\label{Schur2}
	S(z) = \Delta(z)-K(z), 
\end{equation}
where $\Delta(z)\colon L^2(\RR^d)\to L^2(\RR^d)$ is the maximal 
operator of multiplication by the function 
\begin{equation}\label{def:Delta}
	\Delta(k;z):=\omega(k)-\gamma-z-\alpha^2\int\frac{
	|\la(q)|^2\,\d q}{\omega(k)+\omega(q)+\gamma-z},
\end{equation}
and $K(z)\colon L^2(\RR^d)\to L^2(\RR^d)$ is everywhere defined 
integral operator with the kernel 
\begin{equation}\label{kernel:p}
	p(k,q;z):=\frac{\alpha^2\ov{\la(k)}\la(q)}
		{\omega(k)+\omega(q)+\gamma-z}.
\end{equation}
Since $B(D-z)^{-1}\colon L^2_s(\RR^d\times\RR^d)\to L^2(\RR^d)$ is 
an everywhere defined bounded operator, the following 
Frobenius-Schur factorization holds 
\begin{equation}\label{Frobenius-Schur}
	\cA-z = \begin{pmatrix}
	I & \! B(D-z)^{-1}\\[1ex]
	0 & \! I
	\end{pmatrix}
	\begin{pmatrix}
	S(z) & \! 0\\[1ex]
	0 & \! D-z
	\end{pmatrix}
	\begin{pmatrix}
	I & \! 0\\[1ex]
	(D-z)^{-1}B^* & \! I
	\end{pmatrix}.
\end{equation}
%
In view this factorization, we can reduce the study of 
the essential spectrum of the operator matrix $\cA$ to 
that of the non-linear pencil $S$. More precisely, 
we have
\begin{equation}\label{Schur.power.ess.spec}
	\sess(\cA)\cap\rho(D)=\sess(S)\cap\rho(D).
\end{equation}
Moreover, we have the following crucial result the proof 
of which is similar to that of~ 
\cite[Lemma~3]{Ibrogimov-AHP2018} and follows from 
the 
factorization \eqref{Frobenius-Schur}.
\begin{lemma}\label{lem:EV.count.1}
If $z\in\rho(D)$ satisfies 
$N\bigl((-\infty,0);S(z)\bigr)<\infty$, 
then there exists $\delta>0$ such that 
	\begin{equation}
		\sd(\cA)\cap(z-\delta,z)=\varnothing.
	\end{equation}
Similarly, if $z\in\rho(D)$ satisfies 
$N\bigl((-\infty,0);-S(z)\bigr)<\infty$, then there 
exists $\delta>0$ such that 
	\begin{equation}
		\sd(\cA)\cap(z, z+\delta)=\varnothing.
	\end{equation}
\end{lemma}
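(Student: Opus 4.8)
The plan is to prove the stronger fact that a whole left-neighbourhood of $z$ lies in $\rho(\cA)$, by showing that the Schur complement is invertible there. Concretely, once one knows that $0\in\rho(S(\mu))$ for every $\mu$ in some $(z-\delta,z)$, the Frobenius--Schur factorisation \eqref{Frobenius-Schur} (with $z$ replaced by such a $\mu\in\rho(D)$) exhibits $\cA-\mu$ as a product of three boundedly invertible operators --- the two triangular factors, and the block-diagonal factor $\mathrm{diag}(S(\mu),D-\mu)$, which is invertible precisely because $\mu\in\rho(D)$ and $0\in\rho(S(\mu))$. Hence $(z-\delta,z)\subset\rho(\cA)$, which is more than enough. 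The second assertion of the lemma is entirely symmetric: one runs the same argument for the pencil $-S(\mu)$, which is \emph{non-decreasing} in $\mu$, on a right-neighbourhood of $z$; so I will only describe the first.

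I would keep $0$ out of $\sess(S(\mu))$ first. Since $z\in\rho(D)$, the denominator in \eqref{kernel:p} is bounded away from $0$, so $|p(k,q;z)|\le\mathrm{const}\cdot|\la(k)\la(q)|$ and $K(z)$ is Hilbert--Schmidt; therefore $\sess(S(z))=\sess(\Delta(z))=\mathrm{essran}\,\Delta(\cdot\,;z)$ (a multiplication operator has no discrete spectrum). The hypothesis $N((-\infty,0);S(z))<\infty$ forces $\inf\sess(S(z))\ge0$, i.e.\ $\Delta(k;z)\ge0$ for a.e.\ $k$. Differentiating \eqref{def:Delta} under the integral gives $\partial_\mu\Delta(k;\mu)\le-1$ on the connected component of $\rho(D)\cap\RR$ containing $z$, so for $\mu$ in that component with $\mu<z$ one has $\Delta(k;\mu)\ge\Delta(k;z)+(z-\mu)\ge z-\mu>0$ a.e.; thus $\Delta(\mu)$ is boundedly invertible with $\sess(S(\mu))=\sigma(\Delta(\mu))\subset[z-\mu,\infty)$, and in particular $0\notin\sess(S(\mu))$ for all $\mu\in(z-\delta_0,z)$, some $\delta_0>0$.

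Next I would keep $0$ out of $\sigma_p(S(\mu))$ using monotonicity. Differentiating \eqref{Schur2}--\eqref{kernel:p} gives $\frac{\d}{\d\mu}S(\mu)=-I-B(D-\mu)^{-2}B^*\le-I$, hence $S(\mu_1)\ge S(\mu_2)+(\mu_2-\mu_1)I$ for $\mu_1<\mu_2$ in that component. By the min-max description of the dimension of a negative spectral subspace, $\mu\mapsto N((-\infty,0);S(\mu))$ is then non-decreasing, and it is bounded above by $N((-\infty,0);S(z))<\infty$ on $(-\infty,z)$; being integer-valued it is eventually constant, say $\equiv n_0$ on some $(z-\delta,z)$ with $0<\delta\le\delta_0$. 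Now suppose $0\in\sigma_p(S(\mu_0))$ for some $\mu_0\in(z-\delta,z)$. By the previous paragraph $0$ is then an isolated eigenvalue of $S(\mu_0)$ lying strictly below $\sess(S(\mu_0))$, so it equals the $(n_0{+}1)$-st eigenvalue of $S(\mu_0)$ counted from the bottom; since $S(\cdot)$ is norm-continuous and strictly decreasing, that eigenvalue of $S(\mu)$ stays below $\sess(S(\mu))$ and crosses $0$ strictly downward as $\mu$ passes $\mu_0$, whence $N((-\infty,0);S(\mu))\ge n_0+1$ just above $\mu_0$ --- contradicting the local constancy. Therefore $0\notin\sigma_p(S(\mu))$ on $(z-\delta,z)$, and combined with $0\notin\sess(S(\mu))$ we get $0\in\rho(S(\mu))$ there, completing the proof.

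I expect the delicate point to be the eigenvalue-crossing argument of the last paragraph: making rigorous that a zero eigenvalue of $S(\mu_0)$ produces a genuine upward jump of $\mu\mapsto N((-\infty,0);S(\mu))$ at $\mu_0$, uniformly near $\mu_0$. This is exactly where the a priori bound $\Delta(k;z)\ge0$ a.e.\ (hence $\inf\sess(S(\mu))\ge z-\mu>0$) is indispensable --- without it the eigenvalue approaching $0$ could be absorbed into the essential spectrum and the counting would break down. I would also note that when $z$ lies below $\sigma(D)$ the argument shortcuts: there $N((-\infty,0);D-\mu)=0$ for $\mu$ near $z$, so \eqref{Frobenius-Schur} together with Sylvester's law of inertia gives $N((-\infty,\mu);\cA)=N((-\infty,0);S(\mu))$ directly, and the monotonicity step alone shows this is eventually constant as $\mu\uparrow z$, so $(z-\delta,z)\cap\sigma(\cA)=\varnothing$; the general argument above is really needed only when $z$ lies above $\sigma(D)$, where that counting function is infinite and one must argue intrinsically with the Schur complement.
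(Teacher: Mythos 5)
Your argument is correct and rests on the same two ingredients as the paper's proof (which defers to \cite{Ibrogimov-AHP2018}, Lemma~3): the Frobenius--Schur factorization \eqref{Frobenius-Schur} and the operator-monotonicity $\frac{\d}{\d\mu}S(\mu)\le -I$ of the Schur complement, here combined with the compactness of $K(z)$ to locate $\sess(S(\mu))$. You in fact obtain the slightly stronger conclusion $(z-\delta,z)\subset\rho(\cA)$; the only presentational caveat is that your eigenvalue-crossing step is most cleanly made rigorous via min--max applied to the spectral subspace of $S(\mu_0)$ for $(-\infty,0]$, which gives $N\bigl((-\infty,0);S(\mu)\bigr)\ge N\bigl((-\infty,0];S(\mu_0)\bigr)\ge n_0+1$ for $\mu_0<\mu<z$ without having to track an individual eigenvalue branch.
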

\begin{remark}\label{rem:on.Delta.and.K}
For each $z\in\rho(D)$, the operators 
$S(z):L^2(\RR^d)\to L^2(\RR^d)$ and 
$\Delta(z):L^2(\RR^d)\to L^2(\RR^d)$ are bounded and 
self-adjoint on $\cH_1$. Moreover, the integral operator 
$K(z):L^2(\RR^d)\to L^2(\RR^d)$ is self-adjoint and of 
Hilbert-Schmidt class since its kernel 
$p(\mydot,\mydot;z)$ satisfies
\begin{equation}
	\|p(\mydot,\mydot;z)\|_{L^2_s(\RR^d\times\RR^d)} \leq 
	\frac{2\alpha^2\|\la\|^2_{L^2(\RR^d)}}
	{\text{dist}(z,\sigma(D))}<\infty.
\end{equation} 
\end{remark}
\noindent
The rest of this subsection is devoted to the analysis of 
possible zeros of the continuous function
\begin{equation}\label{Phi}
	\Phi(z)=-\gamma-z-
	\alpha^2\int\frac{|\la(q)|^2\,\d q}{\omega(q)+\gamma-z}, 
	\quad 
	z\in(-\infty,\omega_0+\gamma)
	\cup(\omega_1+\gamma, \infty). 
\end{equation}
It is easy to see that $\Phi$ is strictly decreasing and 
\begin{equation}\label{lim.at.pminfty}
	\lim_{z\downarrow-\infty}\Phi(z)=+\infty, \quad 
	\lim_{z\uparrow +\infty}\Phi(z)=-\infty.
\end{equation}
Moreover, the monotone convergence theorem guarantees 
the existence of the (possibly improper) limits
\begin{equation}\label{mon.conv.thm:lim.at.boundary}
	\lim_{z\uparrow \omega_0+\gamma}\Phi(z)=:
			\Phi(\omega_0+\gamma), \quad 
	\lim_{z\downarrow \omega_1+\gamma}\Phi(z)=:
	\Phi(\omega_1+\gamma).
\end{equation}
If the condition 
\begin{equation}\label{strong.ultrared.regularity.m}
	\frac{\la}{\sqrt{\omega-\omega_0}}\in L^2(\RR^d)
\end{equation} 
is satisfied, then the first limit in 
\eqref{mon.conv.thm:lim.at.boundary} is finite for all 
$\alpha>0$ and is given by 
\begin{equation}
	\Phi(\omega_0+\gamma):=-2\gamma-\omega_0-
	\alpha^2\int\frac{|\la(q)|^2\,\d q}{\omega(q)-\omega_0}.
\end{equation}
We distinguish the two cases:
\begin{itemize}
\item \emph{Either $\gamma\geq-\frac{\omega_0}{2}$ and 
	$\alpha>0$ is arbitrary, or $\gamma<-\frac{\omega_0}{2}$ 
	and $\alpha>\alpha_1$, where}
	\begin{equation}\label{alpha.cr.1}
		\ds\alpha_1:=\alpha_1(\gamma):=
		\frac{\sqrt{-2\gamma-\omega_0}}{\bigl\|
			\frac{\la}{\sqrt{\omega-\omega_0}}
			\bigr\|_{L^2(\RR^d)}}.
	\end{equation}
	In this case, we have $\Phi(\omega_0+\gamma)<0$. 
	Hence, the first relation in \eqref{lim.at.pminfty} 
	and the monotonicity imply that the continuous function 
	$\Phi$ has \emph{a unique zero} -- denoted by 
	$E:=E(\alpha,\gamma)$ -- in the interval 
	$(-\infty, \omega_0+\gamma)$.
\item \emph{The case $\gamma<-\frac{\omega_0}{2}$ and 
	$0<\alpha\leq \alpha_1$}. In this case, we have 
	$\Phi(\omega_0+\gamma)\geq0$. Hence, the first relation 
	in \eqref{lim.at.pminfty} and the monotonicity imply that 
	the continuous function $\Phi$ does not vanish in the 
	interval $(-\infty, \omega_0+\gamma)$.
\end{itemize}
If \eqref{strong.ultrared.regularity.m} does 
not hold, then the first limit in 
\eqref{mon.conv.thm:lim.at.boundary} is negative infinity 
for all $\alpha>0$. Hence, the first relation in 
\eqref{lim.at.pminfty} and the monotonicity imply that 
the continuous function $\Phi$ has \emph{a unique zero} 
(again denoted by) $E:=E(\gamma, \alpha)$ in the interval 
$(-\infty, \omega_0+\gamma)$ for all $\alpha>0$.

\smallskip
\noindent
Similarly, if the condition 
\begin{equation}\label{strong.ultrared.regularity.M}
	\frac{\la}{\sqrt{\omega_1-\omega}}\in L^2(\RR^d)
\end{equation} 
is satisfied, then the second limit in 
\eqref{mon.conv.thm:lim.at.boundary} is finite for all 
$\alpha>0$ and is given by 
\begin{equation}
	\Phi(\omega_1+\gamma):=
	-2\gamma-\omega_1+\alpha^2
	\int\frac{|\la(q)|^2\,\d q}{\omega_1-\omega(q)}.
\end{equation}
We again distinguish the two cases:
\begin{itemize}
\item \emph{Either $\gamma\leq-\frac{\omega_1}{2}$ and 
	$\alpha>0$ is arbitrary, or $\gamma>-\frac{\omega_1}{2}$ 
	and $\alpha>\alpha_3$, where}
	\begin{equation}\label{alpha.cr.3}
	\ds\alpha_3:=\alpha_3(\gamma):=
		\frac{\sqrt{\omega_1+2\gamma}}{\bigl\|
			\frac{\la}{\sqrt{\omega_1-\omega}}
			\bigr\|_{L^2(\RR^d)}}.
	\end{equation}
	In this case, we have $\Phi(\omega_1+\gamma)>0$. Hence, 
	the second relation in \eqref{lim.at.pminfty} and the 
	monotonicity imply that the continuous function 
	$\Phi$ has \emph{a unique zero} -- denoted by 
	$F:=F(\gamma, \alpha)$ -- in the interval 
	$(\omega_1+\gamma, \infty)$.
	\item \emph{The case $\gamma>-\frac{\omega_1}{2}$ and 
	$0<\alpha\leq \alpha_3$}. In this case, we have 
	$\Phi(\omega_1+\gamma)\leq0$. Hence, the second relation 
	in \eqref{lim.at.pminfty} and the monotonicity imply that 
	the continuous function $\Phi$ does not vanish in the 
	interval $(\omega_1+\gamma, \infty)$.	
\end{itemize}
If \eqref{strong.ultrared.regularity.M} does not hold, then 
the second limit in \eqref{mon.conv.thm:lim.at.boundary} is 
positive infinity for all $\alpha>0$. Hence, the second 
relation in \eqref{lim.at.pminfty} and the monotonicity imply 
that the continuous function $\Phi$ has \emph{a unique zero} 
(again denoted by) $F:=F(\gamma, \alpha)$ in the interval 
$(\omega_1+\gamma, \infty)$ for all $\alpha>0$.

\smallskip

In addition to \eqref{alpha.cr.1} and \eqref{alpha.cr.3}, we 
will also be dealing with the following particular values 
of the coupling constant
\begin{equation}\label{alpha2,4}
	\ds\alpha_2:=\alpha_2(\gamma):=
	\frac{\sqrt{\omega_1-2\omega_0-2\gamma}}{\Bigl\|
		\frac{\la}{\sqrt{\omega_1-2\omega_0+\omega}}
		\Bigr\|_{L^2(\RR^d)}}, 
	\quad 
	\ds\alpha_4:=\alpha_4(\gamma):=
	\frac{\sqrt{2\omega_1-\omega_0+2\gamma}}{\Bigl\|
		\frac{\la}{\sqrt{2\omega_1-\omega_0-\omega}}
		\Bigr\|_{L^2(\RR^d)}}
\end{equation}
whenever $\gamma\leq\frac{\omega_1}{2}-\omega_0$ and 
$\gamma\geq\frac{\omega_0}{2}-\omega_1$, respectively. 
Since the denominators of both fractions in \eqref{alpha2,4} 
lie between 
$\frac{1}{\sqrt{2(\omega_1-\omega_0)}}\|\la\|_{L^2(\RR^d)}$ 
and $\frac{1}{\sqrt{\omega_1-\omega_0}}\|\la\|_{L^2(\RR^d)}$, 
the quantities in \eqref{alpha2,4} are well-defined. 

\subsection{Essential spectrum of the reduced operator matrix}
%
The following theorem provides an explicit description of the 
essential spectrum of the self-adjoint operator matrix $\cA$ 
corresponding to 
\eqref{reduced.op.mat.}-\eqref{op.entr.op.mat.A}. 
The structure of the essential 
spectrum depends on the location of the parameters $\alpha>0$ 
and $\gamma\in\RR$ as well as the violation of the conditions 
\eqref{strong.ultrared.regularity.m} and 
\eqref{strong.ultrared.regularity.M}.
\begin{theorem}\label{thm:ess.spec.A}
Let the coupling constant $\alpha>0$ be arbitrary. 
\begin{enumerate}[\upshape (i)]
\item Suppose that both of \eqref{strong.ultrared.regularity.m} 
	and \eqref{strong.ultrared.regularity.M} are violated. 
	\begin{enumerate}[\upshape (a)]
	\item If $\gamma \in\bigl(-\infty, 
	\frac{\omega_0}{2}-\omega_1\bigr]$, then 
		\begin{equation*}
		\sess(\cA)=
		\begin{cases} 
		[\omega_0+E,2\omega_1+\gamma]\cup[\omega_0+F,\omega_1+F] 
			& \mbox{if } \alpha\in(0,\alpha_2], \\
		[\omega_0+E,\omega_1+E]\cup[2\omega_0+\gamma,2\omega_1+\gamma]
		\cup[\omega_0+F,\omega_1+F] 
			& \mbox{if } \alpha\in(\alpha_2,\infty).
		\end{cases}
		\end{equation*}
	\item If $\gamma\in\bigl(\frac{\omega_0}{2}-\omega_1, 
	\frac{\omega_1}{2}-\omega_0\bigr]$, then 
		\begin{equation*}
		\sess(\cA)= 
		\begin{cases} 
		[\omega_0+E,\omega_1+F] 
			& \mbox{if } \alpha\in(0,\alpha_2], \\
		[\omega_0+E,\omega_1+E]\cup[2\omega_0+\gamma,\omega_1+F] 
			& \mbox{if } \alpha\in(\alpha_2, \alpha_4], \\
		[\omega_0+E,\omega_1+E]
			\cup[2\omega_0+\gamma,2\omega_1+\gamma]
			\cup[\omega_0+F,\omega_1+F] 
			& \mbox{if } \alpha\in(\alpha_4, \infty).
		\end{cases}
		\end{equation*}
	\item If $\gamma\in\bigl(\frac{\omega_1}{2}-\omega_0,\infty\bigr)$, 
	then 
		\begin{equation*}
		\sess(\cA)= 
		\begin{cases} 
		[\omega_0+E,\omega_1+E]\cup[2\omega_0+\gamma,\omega_1+F] 
			& \mbox{if } \alpha\in(0,\alpha_4],\\
		[\omega_0+E,\omega_1+E]\cup
			[2\omega_0+\gamma,2\omega_1+\gamma]\cup 
			[\omega_0+F,\omega_1+F] 
			& \mbox{if } \alpha\in(\alpha_4, \infty).
		\end{cases}
		\end{equation*}
\end{enumerate}
\item Suppose that \eqref{strong.ultrared.regularity.m} is 
satisfied but \eqref{strong.ultrared.regularity.M} is 
violated.
	\begin{enumerate}[\upshape (a)]
	\item If 
	$\gamma\in\bigl(-\infty,\frac{\omega_0}{2}-\omega_1\bigr]$, 
	then 
		\begin{equation*}
		\sess(\cA)= 
		\begin{cases} 
		[2\omega_0+\gamma,2\omega_1+\gamma]\cup
		[\omega_0+F,\omega_1+F] 
			& \mbox{if } \alpha\in(0,\alpha_1],\\
		[\omega_0+E,2\omega_1+\gamma]\cup 
		[\omega_0+F,\omega_1+F] 
			& \mbox{if } \alpha\in(\alpha_1,\alpha_2],\\
		[\omega_0+E,\omega_1+E]\cup
		[2\omega_0+\gamma,2\omega_1+\gamma]\cup
		[\omega_0+F,\omega_1+F] 
			& \mbox{if } \alpha\in(\alpha_2, \infty).
		\end{cases}
		\end{equation*}
	\item If 
	$\gamma\in\bigl(\frac{\omega_0}{2}-\omega_1,
	-\frac{\omega_0}{2}\bigr]$, then
		\begin{equation*}
		\sess(\cA)= 
		\begin{cases} 
		[2\omega_0+\gamma,\omega_1+F] 
			& \mbox{if } \alpha\in(0,\alpha_1],\\
		[\omega_0+E,\omega_1+F] 
			& \mbox{if } \alpha\in(\alpha_1,\alpha_2],\\
		[\omega_0+E,\omega_1+E]\cup[2\omega_0+\gamma,\omega_1+F] 
			& \mbox{if } \alpha\in(\alpha_2, \alpha_4],\\
		[\omega_0+E,\omega_1+E]\cup
		[2\omega_0+\gamma,2\omega_1+\gamma]\cup
		[\omega_0+F,\omega_1+F] 
			& \mbox{if } \alpha\in(\alpha_4, \infty).
		\end{cases}
		\end{equation*}
	\item If $\gamma\in\bigl(-\frac{\omega_0}{2}, \frac{\omega_1}{2}-\omega_0\bigr]$, then
	\begin{equation*}
	\sess(\cA)= 
	\begin{cases} 
	[\omega_0+E,\omega_1+F] 
	& \mbox{if } \alpha\in(0,\alpha_2],\\
	[\omega_0+E,\omega_1+E]\cup[2\omega_0+\gamma,\omega_1+F] 
	& \mbox{if } \alpha\in(\alpha_2, \alpha_4],\\
	[\omega_0+E,\omega_1+E]\cup
	[2\omega_0+\gamma,2\omega_1+\gamma]\cup
	[\omega_0+F,\omega_1+F] 
	& \mbox{if } \alpha\in(\alpha_4, \infty).
	\end{cases}
	\end{equation*}
	\item If $\gamma\in\bigl(\frac{\omega_1}{2}-\omega_0, \infty)$, 
	then 
		\begin{equation*}
		\sess(\cA)= 
		\begin{cases} 
		[\omega_0+E,\omega_1+E]
		\cup[2\omega_0+\gamma,\omega_1+F] 
			& \mbox{if } \alpha\in(0, \alpha_4],\\
		[\omega_0+E,\omega_1+E]\cup
		[2\omega_0+\gamma,2\omega_1+\gamma]\cup
		[\omega_0+F,\omega_1+F] 
			& \mbox{if } \alpha\in(\alpha_4, \infty).
		\end{cases}
		\end{equation*}
	\end{enumerate}
\item Suppose that \eqref{strong.ultrared.regularity.m} is 
violated but \eqref{strong.ultrared.regularity.M} is 
satisfied.
	\begin{enumerate}[\upshape (a)]
	\item If $\gamma\in\bigl(-\infty,
	\frac{\omega_0}{2}-\omega_1\bigr]$, then
	\begin{equation*}
		\sess(\cA)= 
		\begin{cases} 
		[\omega_0+E,2\omega_1+\gamma]\cup
		[\omega_0+F,\omega_1+F] 
			& \mbox{if } \alpha\in(0,\alpha_2],\\
		[\omega_0+E,\omega_1+E]\cup
		[2\omega_0+\gamma,2\omega_1+\gamma]\cup
		[\omega_0+F,\omega_1+F] 
			& \mbox{if } \alpha\in(\alpha_2, \infty).
		\end{cases}
	\end{equation*}
	\item If $\gamma\in\bigl(\frac{\omega_0}{2}-\omega_1,
	-\frac{\omega_1}{2}\bigr]$, then 
	\begin{equation*}
		\sess(\cA)= 
		\begin{cases} 
		[\omega_0+E,\omega_1+F] 
			& \mbox{if } \alpha\in(0,\alpha_2],\\
		[\omega_0+E,\omega_1+E]\cup
		[2\omega_0+\gamma,\omega_1+F] 
			& \mbox{if } \alpha\in(\alpha_2, \alpha_4],\\
		[\omega_0+E,\omega_1+E]\cup
		[2\omega_0+\gamma,2M+\gamma]
		\cup[\omega_0+F,\omega_1+F] 
			& \mbox{if } \alpha\in(\alpha_4, \infty).
		\end{cases}
	\end{equation*}
	\item If $\gamma\in\bigl(-\frac{\omega_1}{2},\frac{\omega_1}{2}-\omega_0\bigr]$, 
	then
	\begin{equation*}
		\sess(\cA)= 
		\begin{cases} 
		[\omega_0+E,2\omega_1+\gamma] 
			& \mbox{if } \alpha\in(0,\alpha_2],\\
		[\omega_0+E,\omega_1+E]\cup
		[2\omega_0+\gamma,2\omega_1+\gamma] 
			& \mbox{if } \alpha\in(\alpha_2,\alpha_3],\\
		[\omega_0+E,\omega_1+E]\cup
		[2\omega_0+\gamma,\omega_1+F] 
			& \mbox{if } \alpha\in(\alpha_3,\alpha_4],\\
		[\omega_0+E,\omega_1+E]\cup
		[2\omega_0+\gamma,2\omega_1+\gamma]\cup
		[\omega_0+F,\omega_1+F] 
			& \mbox{if } \alpha\in(\alpha_4,\infty).
		\end{cases}
	\end{equation*}
	\item If $\gamma\in\bigl(
	\frac{\omega_1}{2}-\omega_0,\infty)$, then 
	\begin{equation*}
		\sess(\cA)= 
		\begin{cases} 
		[\omega_0+E,\omega_1+E]\cup
		[2\omega_0+\gamma,2\omega_1+\gamma] 
			& \mbox{if } \alpha\in(0,\alpha_3],\\
		[\omega_0+E,\omega_1+E]\cup
		[2\omega_0+\gamma,\omega_1+F] 
			& \mbox{if } \alpha\in(\alpha_3, \alpha_4],\\
		[\omega_0+E,\omega_1+E]\cup
		[2\omega_0+\gamma,2\omega_1+\gamma]\cup
		[\omega_0+F,\omega_1+F] 
			& \mbox{if } \alpha\in(\alpha_4, \infty).
		\end{cases}
	\end{equation*}
	\end{enumerate}
\item Suppose that both of \eqref{strong.ultrared.regularity.m} 
and \eqref{strong.ultrared.regularity.M} are satisfied.
	\begin{enumerate}[\upshape (a)]
	\item If $\gamma\in\bigl(-\infty, 
	\frac{\omega_0}{2}-\omega_1\bigr]$, then 
	\begin{equation*}
		\sess(\cA)= 
		\begin{cases} 
		[2\omega_0+\gamma,2\omega_1+\gamma]\cup
		[\omega_0+F,\omega_1+F] 
			& \mbox{if } \alpha\in(0,\alpha_1],\\
		[\omega_0+E,2\omega_1+\gamma]\cup
		[\omega_0+F,\omega_1+F] 
			& \mbox{if } \alpha\in(\alpha_1,\alpha_2], \\
		[\omega_0+E,\omega_1+E]\cup
		[2\omega_0+\gamma,2\omega_1+\gamma]\cup
		[\omega_0+F,\omega_1+F] 
			& \mbox{if } \alpha\in(\alpha_2, \infty).
		\end{cases}
	\end{equation*}
	\item If $\gamma\in\bigl(\frac{\omega_0}{2}-\omega_1,
	-\frac{\omega_1}{2}\bigr]$, then
	\begin{equation*}
		\sess(\cA)= 
		\begin{cases} 
		[2\omega_0+\gamma, \omega_1+F] 
			& \mbox{if } \alpha\in(0,\alpha_1],\\ 
		[\omega_0+E,\omega_1+F] 
			& \mbox{if } \alpha\in(\alpha_1,\alpha_2],\\
		[\omega_0+E,\omega_1+E]\cup
		[2\omega_0+\gamma,\omega_1+F] 
			& \mbox{if } \alpha\in(\alpha_2, \alpha_4],\\
		[\omega_0+E,\omega_1+E]\cup
		[2\omega_0+\gamma,2\omega_1+\gamma]\cup
		[\omega_0+F,\omega_1+F] 
			& \mbox{if } \alpha\in(\alpha_4,\infty).
		\end{cases}
	\end{equation*}
	\item If $\gamma\in\bigl(-\frac{\omega_1}{2},-\frac{\omega_0}{2}\bigr]$, then 
	\begin{equation*}
		\sess(\cA)= 
		\begin{cases} 
		[2\omega_0+\gamma,2\omega_1+\gamma] 
			& \mbox{if } \alpha\in(0,\alpha_1],\\ 
		[\omega_0+E,2\omega_1+\gamma] 
			& \mbox{if } \alpha\in(\alpha_1,\alpha_2],\\
		[\omega_0+E,\omega_1+E]\cup
		[2\omega_0+\gamma,2\omega_1+\gamma] 
			& \mbox{if } \alpha\in(\alpha_2, \alpha_3],\\
		[\omega_0+E,\omega_1+E]\cup
		[2\omega_0+\gamma,\omega_1+F] 
			& \mbox{if } \alpha\in(\alpha_3, \alpha_4],\\
		[\omega_0+E,\omega_1+E]\cup
		[2\omega_0+\gamma,2\omega_1+\gamma]\cup
		[\omega_0+F,\omega_1+F] 
			& \mbox{if } \alpha\in(\alpha_4,\infty).
		\end{cases}
	\end{equation*}
	\item If $\gamma\in\bigl(-\frac{\omega_0}{2}, 
	\frac{\omega_1}{2}-\omega_0\bigr]$, then 
	\begin{equation*}
		\sess(\cA)= 
		\begin{cases} 
		[\omega_0+E,2\omega_1+\gamma] 
			& \mbox{if } \alpha\in(0,\alpha_2],\\ 
		[\omega_0+E,\omega_1+E]\cup
		[2\omega_0+\gamma,2\omega_1+\gamma] 
			& \mbox{if } \alpha\in(\alpha_2,\alpha_3],\\
		[\omega_0+E,\omega_1+E]\cup
		[2\omega_0+\gamma,\omega_1+F] 
			& \mbox{if } \alpha\in(\alpha_3, \alpha_4],\\
		[\omega_0+E,\omega_1+E]\cup
		[2\omega_0+\gamma,2\omega_1+\gamma]\cup
		[\omega_0+F,\omega_1+F] & \mbox{if } \alpha\in(\alpha_4, \infty).
		\end{cases}
	\end{equation*}
	\item If $\gamma\in\bigl(\frac{\omega_1}{2}-\omega_0, 
	\infty\bigr)$, then 
	\begin{equation*}
		\sess(\cA)= 
		\begin{cases} 
		[\omega_0+E,\omega_1+E]\cup
		[2\omega_0+\gamma,2\omega_1+\gamma] 
			& \mbox{if } \alpha\in(0,\alpha_3],\\
		[\omega_0+E,\omega_1+E]\cup
		[2\omega_0+\gamma,\omega_1+F] 
			& \mbox{if } \alpha\in(\alpha_3, \alpha_4],\\
		[\omega_0+E,\omega_1+E]\cup
		[2\omega_0+\gamma,2\omega_1+\gamma]\cup
		[\omega_0+F,\omega_1+F] 
			& \mbox{if } \alpha\in(\alpha_4,\infty).
		\end{cases}
	\end{equation*}
	\end{enumerate}
\end{enumerate}
\end{theorem}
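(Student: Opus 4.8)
The plan is to reduce, via the Frobenius--Schur factorization~\eqref{Frobenius-Schur}, the computation of $\sess(\cA)\cap\rho(D)$ to an essential-range computation for a multiplication operator, to add the contribution $\sigma(D)\subseteq\sess(\cA)$, and then to unwind the resulting description cell by cell. For the first step I would fix $z\in\rho(D)$; by~\eqref{Schur.power.ess.spec} we have $z\in\sess(\cA)$ iff $0\in\sess(S(z))$, and by Remark~\ref{rem:on.Delta.and.K} the operator $S(z)=\Delta(z)-K(z)$ of~\eqref{Schur2} is a compact perturbation of the bounded self-adjoint multiplication operator $\Delta(z)$, so $\sess(S(z))=\sess(\Delta(z))$ equals the essential range of $k\mapsto\Delta(k;z)$. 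By~\eqref{def:Delta} this multiplier depends on $k$ only through $\omega(k)$: writing $\Delta(k;z)=\psi(\omega(k);z)$ with $\psi(t;z):=t-\gamma-z-\alpha^{2}\int|\la(q)|^{2}(t+\omega(q)+\gamma-z)^{-1}\,\d q$, one has for $z\in\rho(D)$ and $t\in[\omega_{0},\omega_{1}]$ that $\partial_{t}\psi(t;z)=1+\alpha^{2}\int|\la(q)|^{2}(t+\omega(q)+\gamma-z)^{-2}\,\d q\ge1$, so $\psi(\mydot;z)$ maps $[\omega_{0},\omega_{1}]$ continuously and strictly monotonically onto $[\psi(\omega_{0};z),\psi(\omega_{1};z)]$. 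Since $\omega$ is continuous and non-constant, its essential range is exactly $[\omega_{0},\omega_{1}]$, whence $\sess(\Delta(z))=[\psi(\omega_{0};z),\psi(\omega_{1};z)]$. A direct comparison with~\eqref{Phi} gives $\psi(\omega_{0};z)=\Phi(z-\omega_{0})$ and $\psi(\omega_{1};z)=\Phi(z-\omega_{1})$, and for $z\in\rho(D)=(-\infty,2\omega_{0}+\gamma)\cup(2\omega_{1}+\gamma,\infty)$ both $z-\omega_{0}$ and $z-\omega_{1}$ lie in the domain of $\Phi$, where $\Phi$ is continuous and (as established before the theorem) strictly decreasing. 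Hence
\[
\sess(\cA)\cap\rho(D)=\bigl\{\,z\in\rho(D):\ \Phi(z-\omega_{0})\le0\le\Phi(z-\omega_{1})\,\bigr\}.
\]

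For the second step I would show $[2\omega_{0}+\gamma,2\omega_{1}+\gamma]=\sigma(D)\subseteq\sess(\cA)$ by a singular sequence. Given $\lambda\in\sigma(D)$, write $\lambda=2\tau+\gamma$ with $\tau\in[\omega_{0},\omega_{1}]$. If $\tau=\omega(k^{\ast})$ for some $k^{\ast}$, take balls $G_{n}\subset\RR^{d}$ about $k^{\ast}$ shrinking to $\{k^{\ast}\}$; if $\tau\in\{\omega_{0},\omega_{1}\}$ is not attained, choose $k_{m}$ with $\omega(k_{m})\to\tau$ and $|k_{m}|\to\infty$ and take balls $G_{n}$ shrinking onto $k_{m_{n}}$ along a suitable subsequence. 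In either case set $f_{n}:=|G_{n}|^{-1}\mathbf 1_{G_{n}\times G_{n}}\in L^{2}_{s}(\RR^{d}\times\RR^{d})$; then $\|f_{n}\|=1$, $f_{n}\rightharpoonup0$ because the supports shrink to a null set, $\|(D-\lambda)f_{n}\|\to0$ by continuity of $\omega$, and $\|Bf_{n}\|^{2}\le\alpha^{2}|G_{n}|^{-1}\bigl|\int_{G_{n}}\la(q)\,\d q\bigr|^{2}\le\alpha^{2}\int_{G_{n}}|\la|^{2}\to0$ since $\la\in L^{2}(\RR^{d})$ and $|G_{n}|\to0$. Thus $(0,f_{n})$ is a singular sequence for $\cA$ at $\lambda$. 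Combining the two steps and using that $\sess(\cA)$ is closed,
\[
\sess(\cA)=[2\omega_{0}+\gamma,2\omega_{1}+\gamma]\ \cup\ \bigl\{\,z\in\rho(D):\ \Phi(z-\omega_{0})\le0\le\Phi(z-\omega_{1})\,\bigr\},
\]
which is automatically a union of finitely many closed intervals.

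For the third step I would read off this set on each component of $\rho(D)$. On $(-\infty,2\omega_{0}+\gamma)$ one has $z-\omega_{0},z-\omega_{1}<\omega_{0}+\gamma$, so (by monotonicity of $\Phi$) the condition becomes $\omega_{0}+E\le z\le\omega_{1}+E$ when the zero $E$ of $\Phi$ in $(-\infty,\omega_{0}+\gamma)$ exists, while $\Phi(z-\omega_{0})\le0$ fails throughout this component when $E$ does not exist; symmetrically, on $(2\omega_{1}+\gamma,\infty)$ the condition becomes $\omega_{0}+F\le z\le\omega_{1}+F$ with $F$ the zero of $\Phi$ in $(\omega_{1}+\gamma,\infty)$. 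Thus the ``Schur branch'' contributes $[\omega_{0}+E,\min\{\omega_{1}+E,2\omega_{0}+\gamma\}]$ on the left (when $E$ exists) and $[\max\{\omega_{0}+F,2\omega_{1}+\gamma\},\omega_{1}+F]$ on the right (when $F$ exists), and the qualitative picture is governed on each side by three yes/no questions. Whether $E$, resp.\ $F$, exists was settled before the theorem: always if~\eqref{strong.ultrared.regularity.m}, resp.~\eqref{strong.ultrared.regularity.M}, fails or if $\gamma\ge-\omega_{0}/2$, resp.\ $\gamma\le-\omega_{1}/2$, and for $\alpha>\alpha_{1}$, resp.\ $\alpha>\alpha_{3}$, otherwise. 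Whether the left branch reaches $2\omega_{0}+\gamma$ amounts, by monotonicity of $\Phi$, to the sign of
\[
\Phi(2\omega_{0}-\omega_{1}+\gamma)=\omega_{1}-2\omega_{0}-2\gamma-\alpha^{2}\bigl\|\la/\sqrt{\omega+\omega_{1}-2\omega_{0}}\,\bigr\|_{L^{2}(\RR^{d})}^{2},
\]
which is $\ge0$ precisely for $\alpha\le\alpha_{2}$ when $\gamma\le\omega_{1}/2-\omega_{0}$, and is negative for all $\alpha>0$ (so the left branch is always separate, consistently with $\alpha_{2}$ being undefined) when $\gamma>\omega_{1}/2-\omega_{0}$; symmetrically, the right branch reaches $2\omega_{1}+\gamma$ according to the sign of
\[
\Phi(2\omega_{1}-\omega_{0}+\gamma)=\omega_{0}-2\omega_{1}-2\gamma+\alpha^{2}\bigl\|\la/\sqrt{2\omega_{1}-\omega_{0}-\omega}\,\bigr\|_{L^{2}(\RR^{d})}^{2},
\]
which is $\le0$ precisely for $\alpha\le\alpha_{4}$ when $\gamma\ge\omega_{0}/2-\omega_{1}$ (always separate when $\gamma<\omega_{0}/2-\omega_{1}$). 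When both branches reach $\sigma(D)$ the three pieces coalesce into the single interval $[\omega_{0}+E,\omega_{1}+F]$, and the intermediate cases $[\omega_{0}+E,2\omega_{1}+\gamma]$ and $[2\omega_{0}+\gamma,\omega_{1}+F]$ arise when exactly one side merges.

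The main obstacle will be purely organisational. One must split the parameter set according to where $\gamma$ lies relative to $\omega_{0}/2-\omega_{1}$, $-\omega_{1}/2$, $-\omega_{0}/2$ and $\omega_{1}/2-\omega_{0}$ (which fixes which of $\alpha_{1},\dots,\alpha_{4}$ are defined and active) and according to the validity of~\eqref{strong.ultrared.regularity.m} and~\eqref{strong.ultrared.regularity.M}, and then, in each resulting cell, verify the orderings among $\alpha_{1},\dots,\alpha_{4}$ implicit in the statement --- e.g.\ $\alpha_{1}\le\alpha_{2}$ always (the numerator of $\alpha_{2}^{2}$ exceeds that of $\alpha_{1}^{2}$ by $\omega_{1}-\omega_{0}$ while its denominator is no larger, since $\omega+\omega_{1}-2\omega_{0}\ge\omega-\omega_{0}$), $\alpha_{3}\le\alpha_{4}$ likewise, and the cell-dependent relation between $\alpha_{2}$ and $\alpha_{4}$ --- by comparing the numerators and denominators in~\eqref{alpha.cr.1}, \eqref{alpha.cr.3}, \eqref{alpha2,4} using only $\omega_{0}\le\omega\le\omega_{1}$. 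With the partition in hand, each cell becomes a one-line consequence of the strict monotonicity of $\Phi$ together with its explicit values at $\omega_{0}+\gamma$, $\omega_{1}+\gamma$, $2\omega_{0}-\omega_{1}+\gamma$ and $2\omega_{1}-\omega_{0}+\gamma$, and assembling all of them reproduces the tables (i)(a)--(iv)(e).
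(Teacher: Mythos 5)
Your proposal is correct and follows essentially the same route as the paper: a singular sequence to show $\sigma(D)\subseteq\sess(\cA)$, the Frobenius--Schur factorization together with the Hilbert--Schmidt compactness of $K(z)$ to reduce $\sess(\cA)\cap\rho(D)$ to the essential range of the multiplier $\Delta(\mydot;z)$, the identification of its endpoints with $\Phi(z-\omega_0)$ and $\Phi(z-\omega_1)$, and finally the sign analysis of $\Phi$ at $\omega_0+\gamma$, $\omega_1+\gamma$, $2\omega_0-\omega_1+\gamma$ and $2\omega_1-\omega_0+\gamma$ to unwind the cases. The only differences are cosmetic (normalized indicator functions in place of scaled smooth bumps for the singular sequence, and an explicit monotone substitution $t=\omega(k)$ in place of the paper's direct $\inf/\sup$ computation).
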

\begin{proof}
Let 
$z_0\in(2\omega_0+\gamma,2\omega_1+\gamma)$ be 
arbitrary. Then $z_0=2\omega(k_0)+\gamma$ for some 
$k_0\in\RR^d$. Further, let $\varphi\in C^\infty(\RR^d)$ be 
an arbitrary function supported in the annulus 
$\{k\in \RR^d : 0.5\leq\|k\|<1\}$ and such that 
$\|\varphi\|_{L^2(\RR^d)}=1$. It follows that the orthonormal 
system $\{\Psi_n\}_{n\in\NN}:=\{(0,\psi_n)^t\}_{n\in\NN}$, 
where 
\begin{equation}
\psi_n(k,q):=2^{nd+\frac{1}{2}}\varphi(2^n(k-k_0))
	\varphi(2^n(q-k_0)), \quad k,q\in\RR^d, 
\end{equation}
is a singular sequence for the operator matrix $\cA-z_0$. 
Hence, as in \cite{Ibrogimov-AHP2018}, we get the inclusion 
\begin{equation}
	[2\omega_0+\gamma,2\omega_1+\gamma]\subset\sess(\cA).
\end{equation}
Since $[2\omega_0+\gamma,2\omega_1+\gamma]=\sigma(D)$, we can 
use the Frobenius-Schur factorization \eqref{Frobenius-Schur} 
to describe $\sess(\cA)\cap\rho(D)$, see~\eqref{Schur.power.ess.spec}.
In fact, as in the proof of \cite[Theorem~1]{Ibrogimov-AHP2018}, 
a simple perturbation argument yields
\begin{equation}\label{ess.spec.thm.2.abstract}
\ds\sess(\cA)=[2\omega_0+\gamma,2\omega_1+\gamma]\cup
	\bigl\{z\notin[2\omega_0+\gamma,2\omega_1+\gamma]: 
		0\in\sess(\Delta(z))\bigr\}.
\end{equation}
Since $\Phi$ is strictly decreasing on its domain of 
definition, we have 
\begin{equation}
	\inf_{k\in\RR^d}\Delta(k;z)=
	\Phi(z-\omega_0), 
	\quad 
	\sup_{k\in\RR^d}\Delta(k;z)=
	\Phi(z-\omega_1)
\end{equation}
for all 
$z\in(-\infty,2\omega_0+\gamma)\cup(2\omega_1+\gamma,\infty)$. 
This and the continuity of $\Delta(\mydot,\mydot)$ in the 
second variable imply that 
\begin{equation}\label{sess.spec.A.bdd.in.proof}
	\ds\sess(\cA)=\Sigma_1\cup
	[2\omega_0+\gamma,2\omega_1+\gamma]\cup\Sigma_2,
\end{equation}
where 
\begin{equation}
	\Sigma_1:=\bigl\{z<2\omega_0+\gamma: 
	\Phi(z-\omega_0)\leq 0 \leq \Phi(z-\omega_1)\bigr\}
\end{equation}
and
\begin{equation}
	\Sigma_2:=\bigl\{z>2\omega_1+\gamma: 
	\Phi(z-\omega_0)\leq 0 \leq \Phi(z-\omega_1)\bigr\}.
\end{equation}
On the other hand, by the monotonicity of the continuous 
function $\Phi$, we easily obtain 
\begin{equation}
	\Sigma_1=
	\begin{cases} 
	\varnothing & \mbox{if } \Phi(\omega_0+\gamma)\geq 0,\\ 
	[\omega_0+E, 2\omega_0+\gamma) 
		& \mbox{if } \Phi(\omega_0+\gamma)<0 
		\mbox{ and } \Phi(2\omega_0-\omega_1+\gamma)\geq0,\\
	[\omega_0+E,\omega_1+E] 
		& \mbox{if } \Phi(2\omega_0-\omega_1+\gamma)<0
	\end{cases}
\end{equation} 
and, similarly,
\begin{equation}
	\Sigma_2=
	\begin{cases} 
	\varnothing & \mbox{if } \Phi(\omega_1+\gamma)\leq 0,\\ 
	(2\omega_1+\gamma,\omega_1+F] 
		& \mbox{if } \Phi(\omega_1+\gamma)>0 
		\mbox{ and } \Phi(2\omega_1-\omega_0+\gamma)\leq0,\\
	[\omega_0+F,\omega_1+F] 
		& \mbox{if } \Phi(2\omega_1-\omega_0+\gamma)>0.
	\end{cases}
\end{equation} 
Now elementary analyses of the signs of 
$\Phi(\omega_0+\gamma)$, $\Phi(\omega_1+\gamma)$, 
$\Phi(2\omega_0-\omega_1+\gamma)$ and 
$\Phi(2\omega_1-\omega_0+\gamma)$ in terms of the parameters 
$\alpha$ and $\gamma$ together with \eqref{sess.spec.A.bdd.in.proof} 
complete the proof. 
\end{proof}
%
%
%
%
%
%
\begin{theorem}\label{thm:asymp.length.og.gaps} 
For each fixed $\gamma\in\RR$, the following asymptotic 
expansions hold
\begin{equation}\label{asymp.exp.of.E1.E2}
	\begin{aligned}
	\ds E(\alpha) &=2\omega_0-\omega_1+\gamma-
	\frac{2(\omega_1-2\omega_0-2\gamma)}{\alpha_2+\alpha_2^3
		\bigl\|\frac{\la}{\omega+\omega_1-2\omega_0}
			\bigr\|^2_{L^2(\RR^d)}}(\alpha-\alpha_2)
			+\rm{o}(\alpha-\alpha_2), & \alpha\downarrow\alpha_2,
	\\[1ex]
	\ds F(\alpha) &= 2\omega_1-\omega_0+\gamma+
	\frac{2(2\omega_1-\omega_0+2\gamma)}{\alpha_4+\alpha_4^3
		\bigl\|\frac{\la}{2\omega_1-\omega_0-\omega}
			\bigr\|^2_{L^2(\RR^d)}}(\alpha-\alpha_4)
			+\rm{o}(\alpha-\alpha_4),  & \alpha\uparrow\alpha_4,
	\end{aligned}
\end{equation}
provided that $\gamma\leq\frac{\omega_1}{2}-\omega_0$ and 
$\gamma\geq\frac{\omega_0}{2}-\omega_1$, respectively.
\end{theorem}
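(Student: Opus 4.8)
The plan is to compute each asymptotic expansion by an implicit–function / Taylor-expansion argument applied to the equation $\Phi(z)=0$ near the critical coupling. Recall from the preliminaries that, under the standing hypotheses $\gamma\le\tfrac{\omega_1}{2}-\omega_0$ (so that $\alpha_2$ is defined) and $\gamma\ge\tfrac{\omega_0}{2}-\omega_1$ (so that $\alpha_4$ is defined), the quantity $E=E(\alpha,\gamma)$ is the unique zero of $\Phi$ in $(-\infty,\omega_0+\gamma)$ and $F=F(\alpha,\gamma)$ is the unique zero of $\Phi$ in $(\omega_1+\gamma,\infty)$. Moreover, from the proof of Theorem~\ref{thm:ess.spec.A} one sees that $\alpha=\alpha_2$ is precisely the value at which $\Phi(2\omega_0-\omega_1+\gamma)=0$, i.e.\ at which $E(\alpha_2,\gamma)=2\omega_0-\omega_1+\gamma$; and $\alpha=\alpha_4$ is precisely the value at which $\Phi(2\omega_1-\omega_0+\gamma)=0$, i.e.\ $F(\alpha_4,\gamma)=2\omega_1-\omega_0+\gamma$. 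This identifies the leading ("zeroth-order") term of each expansion and gives the base point for the perturbative computation.

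I would treat the two expansions symmetrically; consider the first one. Regard $\Psi(z,\alpha):=\Phi(z)$ as a function of the two variables $z$ and $\alpha$ through the explicit formula $\Psi(z,\alpha)=-\gamma-z-\alpha^2\int\frac{|\la(q)|^2\,\d q}{\omega(q)+\gamma-z}$, which is real-analytic in $(z,\alpha)$ for $z<\omega_0+\gamma$ (the integral converges, with all $z$-derivatives obtained by differentiating under the integral sign, justified by dominated convergence using $\omega(q)+\gamma-z\ge\omega_0+\gamma-z>0$ uniformly in $q$). At the base point $(z_\ast,\alpha_2)$ with $z_\ast:=2\omega_0-\omega_1+\gamma$ we have $\Psi(z_\ast,\alpha_2)=0$ by the identification above, and
\[
\partial_z\Psi(z_\ast,\alpha_2)=-1-\alpha_2^2\int\frac{|\la(q)|^2\,\d q}{(\omega(q)+\gamma-z_\ast)^2}
=-1-\alpha_2^2\Bigl\|\tfrac{\la}{\omega+\omega_1-2\omega_0}\Bigr\|_{L^2(\RR^d)}^2\ne0,
\]
where I have used $\omega(q)+\gamma-z_\ast=\omega(q)+\omega_1-2\omega_0$. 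Hence the implicit function theorem produces a real-analytic function $\alpha\mapsto E(\alpha,\gamma)$ near $\alpha_2$ (consistent with the already-established uniqueness of the zero $E$), with
\[
E'(\alpha_2)=-\frac{\partial_\alpha\Psi(z_\ast,\alpha_2)}{\partial_z\Psi(z_\ast,\alpha_2)}
=-\frac{-2\alpha_2\int\frac{|\la(q)|^2\,\d q}{\omega(q)+\omega_1-2\omega_0}}{-1-\alpha_2^2\bigl\|\frac{\la}{\omega+\omega_1-2\omega_0}\bigr\|_{L^2(\RR^d)}^2}.
\]
To put this in the form stated in \eqref{asymp.exp.of.E1.E2} I would use the defining relation $\alpha_2^2=\dfrac{\omega_1-2\omega_0-2\gamma}{\bigl\|\frac{\la}{\sqrt{\omega_1-2\omega_0+\omega}}\bigr\|_{L^2(\RR^d)}^2}$ from \eqref{alpha2,4}, which gives $\alpha_2^2\int\frac{|\la(q)|^2\,\d q}{\omega(q)+\omega_1-2\omega_0}=\omega_1-2\omega_0-2\gamma$, so the numerator of $E'(\alpha_2)$ equals $2\alpha_2(\omega_1-2\omega_0-2\gamma)/\alpha_2^2=2(\omega_1-2\omega_0-2\gamma)/\alpha_2$; multiplying numerator and denominator by $\alpha_2$ then yields exactly $E'(\alpha_2)=-\dfrac{2(\omega_1-2\omega_0-2\gamma)}{\alpha_2+\alpha_2^3\bigl\|\frac{\la}{\omega+\omega_1-2\omega_0}\bigr\|_{L^2(\RR^d)}^2}$. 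The first-order Taylor formula $E(\alpha)=z_\ast+E'(\alpha_2)(\alpha-\alpha_2)+\mathrm{o}(\alpha-\alpha_2)$ as $\alpha\downarrow\alpha_2$ is the claimed expansion.

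The second expansion is entirely parallel: the base point is $(w_\ast,\alpha_4)$ with $w_\ast:=2\omega_1-\omega_0+\gamma>\omega_1+\gamma$, where $\Psi(w_\ast,\alpha_4)=0$; one differentiates under the integral sign (now legitimate because $\omega(q)+\gamma-w_\ast\le\omega_1+\gamma-w_\ast<0$ uniformly in $q$, so $|\omega(q)+\gamma-w_\ast|\ge\omega_1-\omega(q)+\text{const}>0$ is bounded below), computes $\partial_z\Psi(w_\ast,\alpha_4)=-1-\alpha_4^2\bigl\|\tfrac{\la}{2\omega_1-\omega_0-\omega}\bigr\|_{L^2(\RR^d)}^2$ using $w_\ast-\gamma-\omega(q)=2\omega_1-\omega_0-\omega(q)$, applies the implicit function theorem to get a real-analytic branch $F(\alpha,\gamma)$, and rewrites $F'(\alpha_4)$ via $\alpha_4^2=\dfrac{2\omega_1-\omega_0+2\gamma}{\bigl\|\frac{\la}{\sqrt{2\omega_1-\omega_0-\omega}}\bigr\|_{L^2(\RR^d)}^2}$ to arrive at $F(\alpha)=w_\ast+\dfrac{2(2\omega_1-\omega_0+2\gamma)}{\alpha_4+\alpha_4^3\bigl\|\frac{\la}{2\omega_1-\omega_0-\omega}\bigr\|_{L^2(\RR^d)}^2}(\alpha-\alpha_4)+\mathrm{o}(\alpha-\alpha_4)$ as $\alpha\uparrow\alpha_4$. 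I do not expect any genuine obstacle here: the only points needing care are (a) verifying that differentiation under the integral sign is valid at the base points, which follows from the uniform positivity of the denominators noted above together with $\la\in L^2(\RR^d)$ — note in particular that one does \emph{not} need either of the strong regularity conditions \eqref{strong.ultrared.regularity.m}, \eqref{strong.ultrared.regularity.M} here, since $w_\ast$ and $z_\ast$ are strictly interior to the relevant half-lines; and (b) the purely algebraic bookkeeping that converts the raw implicit-function formula for the derivative into the symmetric closed form displayed in \eqref{asymp.exp.of.E1.E2}, using the definitions \eqref{alpha2,4} of $\alpha_2$ and $\alpha_4$. The main "work" is thus this last step of matching constants, which is routine.
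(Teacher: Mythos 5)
Your proposal is correct and follows essentially the same route as the paper: the paper also applies the implicit function theorem to the two-variable function $\psi(x,y)=-\gamma-y-x^2\int\frac{|\la(q)|^2\,\d q}{\omega(q)+\gamma-y}$ at the base point $(\alpha_2,\,2\omega_0-\omega_1+\gamma)$, identifies the resulting branch with $E(\alpha)$ by uniqueness of the zero of $\Phi$, and obtains the slope by the same algebraic substitution from \eqref{alpha2,4}. Your additional remarks (well-definedness of the denominators, justification of differentiation under the integral sign, symmetry of the $F$-case) match the paper's brief preliminary observations.
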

\begin{proof}
First of all, we note that the quantities involved in 
the denominators of the second terms in the asymptotic 
expansions \eqref{asymp.exp.of.E1.E2} are well-defined 
as a consequence of the simple estimate 
\begin{equation*}
	\max\Bigl\{\Bigl\|\frac{\la}{\omega+\omega_1-2\omega_0}
		\Bigr\|_{L^2(\RR^d)}, \; 
		\Bigl\|\frac{\la}{2\omega_1-\omega_0-\omega}
		\Bigr\|_{L^2(\RR^d)}\Bigr\}\leq 
		\bigl(\omega_1-\omega_0\bigr)^{-1}\|\la\|_{L^2(\RR^d)}.
\end{equation*}
We derive the first asymptotic expansion only. The second one 
can be derived similarly. To this end, let us consider the 
function 
\begin{equation}
	\psi(x,y)=-\gamma-y-x^2\int \frac{|\la(q)|^2\,\d q}
		{\omega(q)+\gamma-y}
\end{equation}
for $(x,y)\in [\alpha_2,\infty)\times(-\infty,\omega_0+\gamma]$. 
We have $\psi(\alpha_2, 2\omega_0-\omega_1+\gamma)=0$ and
\begin{equation}
	\frac{\partial\psi}{\partial y}
		(\alpha_2, 2\omega_0-\omega_1+\gamma)=
		-1-\alpha^2_2\int \frac{|\la(q)|^2\,\d q}
			{(\omega(q)-2\omega_0+\omega_1)^2}\leq-1.
\end{equation}
Hence, the implicit function theorem yields the existence 
of a constant $\delta>0$ and a unique continuously 
differentiable function 
$\wh E\colon[\alpha_2, \alpha_2+\delta)\to\RR$ such that 
\begin{equation}\label{E(alpha2)}
	\wh E(\alpha_2)=2\omega_0-\omega_1+\gamma 
\end{equation}
and $\psi(\alpha, \wh E(\alpha))=0$ for all 
$\alpha\in[\alpha_2, \alpha_2+\delta)$. Moreover, we have
\begin{equation}\label{implicit.derivative}
	\ds \wh E'(\alpha_2+0)=
	%
	%
	-\frac{2(\omega_1-2\omega_0-2\gamma)}{\alpha_2+\alpha_2^3
		\bigl\|\frac{\la}{\omega+\omega_1-2\omega_0}
			\bigr\|_{L^2(\RR^d)}^2}.
\end{equation} 
On the other hand, $\Phi(\wh E(\alpha))=0$ for all 
$\alpha\in[\alpha_2, \alpha_2+\delta)$ and, 
since $E(\alpha)$ is the unique zero of the function 
$\Phi$ in the interval $(-\infty,\omega_0+\gamma)$ for all 
$\alpha\geq\alpha_2$, we conclude that $E(\alpha)=\wh E(\alpha)$ 
for all $\alpha\in[\alpha_2, \alpha_2+\delta)$. This implies 
that $E\colon [\alpha_2, \alpha_2+\delta)\to\RR$ is continuously 
differentiable and the desired asymptotic expansion 
follows from \eqref{E(alpha2)} and \eqref{implicit.derivative}. 
\end{proof}
%
%
%
\subsection{Discrete spectrum of the reduced operator matrix}
%
%
\begin{theorem}\label{thm:disc.spec.1}
Let the coupling constant $\alpha>0$ be arbitrary. Then none 
of $\omega_0+E$, $\omega_1+E$, $\omega_0+F$ and $\omega_1+F$ 
can be an accumulation point of the discrete spectrum of the 
operator matrix $\cA$. 
\end{theorem}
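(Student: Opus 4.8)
The plan is to reduce the statement, via Lemma~\ref{lem:EV.count.1}, to the finiteness of $N\bigl((-\infty,0);\pm S(z_\bullet)\bigr)$ at each of the four candidate edge points $z_\bullet$, and to obtain this finiteness from the sign of the Schur complement $S(z)=\Delta(z)-K(z)$ in three of the cases, and from a Birman--Schwinger argument with a rank-one ``splitting'' in the remaining one. I will use throughout that $\Delta(k;z)=\Phi(z-\omega(k))$ for $z\notin[2\omega_0+\gamma,2\omega_1+\gamma]$ (as in the proof of Theorem~\ref{thm:ess.spec.A}), that $\Phi$ is strictly decreasing with $\Phi'\le-1$ on each component of its domain, that $\Phi(E)=\Phi(F)=0$ where these are defined, and the elementary identity $\bigl(\omega(k)+\omega(q)+\gamma-z\bigr)^{-1}=\int_0^\infty e^{-t(2\omega_0+\gamma-z)}e^{-t(\omega(k)-\omega_0)}e^{-t(\omega(q)-\omega_0)}\,\d t$, valid with a norm-convergent integral whenever $z<2\omega_0+\gamma$, which exhibits
\[
K(z)=\alpha^2\int_0^\infty e^{-t(2\omega_0+\gamma-z)}\,|\psi_t\rangle\langle\psi_t|\,\d t\ \ge\ 0,\qquad \psi_t(k):=\overline{\la(k)}\,e^{-t(\omega(k)-\omega_0)},
\]
as a continuous superposition of nonnegative rank-one operators (and, symmetrically, $K(z)\le0$ for $z>2\omega_1+\gamma$).

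\emph{Reduction.} Fix $z_\bullet\in\{\omega_0+E,\omega_1+E,\omega_0+F,\omega_1+F\}$. By Theorem~\ref{thm:ess.spec.A}, either a whole neighbourhood of $z_\bullet$ is contained in $\sess(\cA)$ -- in which case that neighbourhood contains no eigenvalue of $\cA$ and there is nothing to prove -- or else $z_\bullet\in\rho(D)$ and, near $z_\bullet$, $\sess(\cA)$ lies entirely on one side of it: to the right of the left edges $\omega_0+E=\inf\sess(\cA)$ and $\omega_0+F$, and to the left of the right edges $\omega_1+E$ and $\omega_1+F=\sup\sess(\cA)$. In the latter situation Lemma~\ref{lem:EV.count.1} reduces matters to the finiteness of $N\bigl((-\infty,0);S(\omega_0+E)\bigr)$, $N\bigl((-\infty,0);S(\omega_0+F)\bigr)$, $N\bigl((-\infty,0);-S(\omega_1+E)\bigr)$ and $N\bigl((-\infty,0);-S(\omega_1+F)\bigr)$. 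The two $F$-points are treated exactly like the two $E$-points, interchanging the roles of $\omega_0$ and $\omega_1$ (hence of $E$ and $F$) and of the two sides of $\sigma(D)$; so I only discuss $\omega_0+E$ and $\omega_1+E$.

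\emph{The easy edge $\omega_1+E$.} When $\omega_1+E<2\omega_0+\gamma$ (the only case not already covered), the argument $\omega_1+E-\omega(k)$ of $\Phi$ in $\Delta(k;\omega_1+E)$ ranges over $[E,\omega_1-\omega_0+E]\subset(-\infty,\omega_0+\gamma)$ and stays $\ge E$, so monotonicity of $\Phi$ gives $\Delta(\,\cdot\,;\omega_1+E)\le0$; since also $K(\omega_1+E)\ge0$ (because $\omega_1+E<2\omega_0+\gamma$), we obtain $S(\omega_1+E)\le0$, hence $N\bigl((-\infty,0);-S(\omega_1+E)\bigr)=0$. (Likewise $S(\omega_0+F)\ge0$, since $\Delta(\,\cdot\,;\omega_0+F)\ge0$ and $K(\omega_0+F)\le0$ because $\omega_0+F>2\omega_1+\gamma$.)

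\emph{The delicate edge $\omega_0+E$.} Put $z_0:=\omega_0+E$ and $c:=2\omega_0+\gamma-z_0=\omega_0+\gamma-E>0$. Then $K(z_0)=\alpha^2\int_0^\infty e^{-tc}|\psi_t\rangle\langle\psi_t|\,\d t\ge0$ is Hilbert--Schmidt (Remark~\ref{rem:on.Delta.and.K}), while the mean value theorem applied to $\Delta(k;z_0)=\Phi\bigl(E-(\omega(k)-\omega_0)\bigr)$, together with $\Phi(E)=0$ and $\Phi'\le-1$, yields $\Delta(z_0)\ge(\omega-\omega_0)\ge0$. Thus $S(z_0)$ is a difference of a nonnegative operator and a nonnegative compact one, and the Birman--Schwinger principle gives $N\bigl((-\infty,0);S(z_0)\bigr)=\sup_{e>0}N\bigl((1,\infty);T_e\bigr)$ with $T_e:=(\Delta(z_0)+e)^{-1/2}K(z_0)(\Delta(z_0)+e)^{-1/2}=\alpha^2\int_0^\infty e^{-tc}|\psi_t^e\rangle\langle\psi_t^e|\,\d t$, $\psi_t^e:=(\Delta(z_0)+e)^{-1/2}\psi_t$, each $T_e$ nonnegative compact and the family nondecreasing as $e\downarrow0$; so everything reduces to a bound on $N((1,\infty);T_e)$ \emph{uniform in $e$}. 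Here the splitting trick enters: fix a small $\eta\in(0,\omega_1-\omega_0)$, let $\chi$ be the indicator of $\{\omega<\omega_0+\eta\}$, and write $T_e\le2(T_e^{\mathrm{in}}+T_e^{\mathrm{out}})$ according to $\psi_t^e=\chi\psi_t^e+(1-\chi)\psi_t^e$ (using $|\langle f,a+b\rangle|^2\le2|\langle f,a\rangle|^2+2|\langle f,b\rangle|^2$). On $\{\omega\ge\omega_0+\eta\}$ one has $\Delta(z_0)\ge\eta$, so $\|T_e^{\mathrm{out}}\|_{\mathrm{HS}}\le\eta^{-1}\|K(z_0)\|_{\mathrm{HS}}$ uniformly, whence $N\bigl((\tfrac14,\infty);T_e^{\mathrm{out}}\bigr)\le16\,\eta^{-2}\|K(z_0)\|_{\mathrm{HS}}^2<\infty$ uniformly. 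For $T_e^{\mathrm{in}}$, split off the $t$-independent part $\varphi^e:=(\Delta(z_0)+e)^{-1/2}\chi\,\overline{\la}$ via $\chi\psi_t^e=\varphi^e-(\Delta(z_0)+e)^{-1/2}\chi\,\overline{\la}\,(1-e^{-t(\omega-\omega_0)})$; then, using $(1-e^{-x})^2\le x^2$ and $(\omega-\omega_0)^2/(\Delta(z_0)+e)\le\omega-\omega_0<\eta$ on $\supp\chi$ (again by $\Delta(z_0)\ge\omega-\omega_0$), one gets
\[
T_e^{\mathrm{in}}\ \le\ \frac{2\alpha^2}{c}\,|\varphi^e\rangle\langle\varphi^e|\ +\ \frac{4\alpha^2\|\la\|^2}{c^3}\,\eta\cdot I .
\]
Choosing $\eta$ so small that $4\alpha^2\|\la\|^2\eta/c^3<\tfrac18$ forces the second eigenvalue of $T_e^{\mathrm{in}}$ below $\tfrac14$ (the rank-one term contributes only the first), i.e.\ $N\bigl((\tfrac14,\infty);T_e^{\mathrm{in}}\bigr)\le1$ uniformly in $e$ -- even though $\|\varphi^e\|$ may diverge as $e\downarrow0$, precisely when \eqref{strong.ultrared.regularity.m} fails, the single rank-one operator $|\varphi^e\rangle\langle\varphi^e|$ contributes at most one eigenvalue above any threshold. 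Weyl's inequality then gives $N((1,\infty);T_e)\le1+16\,\eta^{-2}\|K(z_0)\|_{\mathrm{HS}}^2$ for all $e>0$, so $N\bigl((-\infty,0);S(z_0)\bigr)<\infty$, and Lemma~\ref{lem:EV.count.1} finishes $\omega_0+E$ (and, mutatis mutandis, $\omega_1+F$). The only genuine obstacle is this last step -- arranging that the ``mass'' of the Birman--Schwinger kernel escaping to infinity as $e\downarrow0$ is carried, after the special decomposition of $(\omega(k)+\omega(q)+\gamma-z_0)^{-1}$, by one rank-one operator; the rest is sign bookkeeping with $\Phi$ and Weyl's inequality.
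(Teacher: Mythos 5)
Your proposal is correct, but it takes a genuinely different route from the paper's on both halves of the statement. For the inner edges $\omega_1+E$ and $\omega_0+F$, the paper splits $K=K_1+K_2$ with $K_1$ of rank two (the kernel decomposition $\Psi_1+\Psi_2$), restricts to $\Omega=\{\omega\neq\omega_1\}$, bounds the remainder kernel pointwise by a constant times $|\la(k_1)||\la(k_2)|$, and invokes Weyl's inequality, concluding that only finitely many eigenvalues can occur near the edge. You instead observe that $K(z)$ is a nonnegative operator for $z<2\omega_0+\gamma$ (and nonpositive for $z>2\omega_1+\gamma$) because $(\omega(k)+\omega(q)+\gamma-z)^{-1}$ is the Laplace transform of a positive product measure, while $\Delta(\,\cdot\,;\omega_1+E)=\Phi(\omega_1+E-\omega(\,\cdot\,))\le\Phi(E)=0$ by monotonicity of $\Phi$; hence $S(\omega_1+E)\le0$ outright and the relevant counting function is zero. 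This is both shorter and strictly stronger (it excludes \emph{all} eigenvalues in a one-sided neighbourhood, not merely an infinite accumulation), though the paper's decomposition is more robust in that it never uses the sign of $K$. For the outer edges $\omega_0+E$ and $\omega_1+F$ the paper only refers to the analogous argument in \cite{Ibrogimov-AHP2018}, whereas you give a self-contained Birman--Schwinger proof: regularize by $\Delta(z_0)+e$, use the lower bound $\Delta(z_0)\ge\omega-\omega_0$ (your mean-value computation is the exact analogue of the paper's factorization identity at the other edge), and split the Laplace representation into a high-energy part that is Hilbert--Schmidt uniformly in $e$ and a low-energy part dominated by a single rank-one operator plus an arbitrarily small multiple of the identity; this correctly isolates the possible infrared divergence of $\|\varphi^e\|$ in a rank-one term. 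Two cosmetic points: the Birman--Schwinger relation you actually need is the inequality $N\bigl((-\infty,0);S(z_0)\bigr)=\lim_{e\downarrow0}N\bigl((-\infty,-e];S(z_0)\bigr)\le\liminf_{e\downarrow0}N\bigl([1,\infty);T_e\bigr)$ rather than the asserted equality with a supremum (the monotonicity of $e\mapsto T_e$ you invoke is neither obvious nor needed), and the closed threshold $[1,\infty)$ forces a harmless renormalization of the cut-offs $\tfrac14$ and $\tfrac12$ in the Weyl step. Neither affects the validity of the argument.
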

\begin{proof}
The proof in the case when $\omega_0+E$ is the bottom 
(resp.~$\omega_1+F$ is the top) of the essential spectrum is 
completely analogous to that of 
\cite[Theorem~1ii)]{Ibrogimov-AHP2018}. Here we give a proof 
of the absence of the discrete eigenvalue accumulation at 
$\omega_1+E$ from the right only. The proof of of the absence 
of the discrete eigenvalue accumulation at $m+F$ from the left 
is analogous. To this end, let $\omega_1+E$ be an edge of the 
essential spectrum so that $(\omega_1+E, 2\omega_0+\gamma)$ 
is an essential spectral gap (see 
Theorem~\ref{thm:ess.spec.A}). 
Let $z\in[\omega_1+E,2\omega_0+\gamma)$ be fixed for a moment. 
Since $[\omega_1+E,2\omega_0+\gamma)\subset\rho(D)$, 
the integral operator $K(z)\colon L^2(\RR^d)\to L^2(\RR^d)$ is 
well-defined and compact (see Remark~\ref{rem:on.Delta.and.K}). 
Consider the decomposition
\begin{equation}
	\frac{1}{\omega(k_1)+\omega(k_2)+\gamma-z}=
	\Psi_1(k_1,k_2;z)+\Psi_2(k_1,k_2;z),
\end{equation}
where
\begin{equation}\label{kernel:k_1} 								
	\Psi_1(k_1,k_2;z):=
		\frac{1}{\omega(k_1)+\omega_1+\gamma-z}+
		\frac{1}{\omega(k_2)+\omega_1+\gamma-z}-
		\frac{1}{2\omega_1+\gamma-z}
\end{equation}
and
\begin{equation}\label{kernel:k_2}
	\Psi_2(k_1,k_2;z):=
		\frac{1}{\omega(k_1)+\omega(k_2)+\gamma-z}-
		\Psi_1(k_1,k_2;z),\quad k_1,k_2\in\RR^d.
\end{equation}
Let us denote by $K_1(z)$ and $K_2(z)$ the integral operators 
in $L^2(\RR^d)$ whose kernels are respectively the functions 
$(k_1,k_2)\mapsto\alpha^2\la(k_1)\ov{\la(k_2)}\Psi_1(k_1,k_2;z)$ 
and 
$(k_1,k_2)\mapsto\alpha^2\la(k_1)\ov{\la(k_2)}\Psi_2(k_1,k_2;z)$ 
so that we have the corresponding decomposition
\begin{equation}\label{decomposition.of.K}
K(z)=K_1(z)+K_2(z).
\end{equation} 
Since each summand in \eqref{kernel:k_1} is uniformly bounded 
by the constant $\frac{1}{2\omega_0+\gamma-z}$, we infer from 
the first condition in \eqref{cond:la.la.omega.in.L2} that 
$K_1(z)$ is a well-defined rank-two operator in $L^2(\RR^d)$. 
In fact, the range of $K_1(z)$ coincides with the subspace of 
$L^2(\RR^d)$ spanned by the functions $\la$ and 
$\frac{\la}{\omega+\omega_1+\gamma-z}$. Therefore, using 
\eqref{Schur2} and \cite[Theorem~IX.3.3]{Birman-Solomjak-87b}, 
we obtain   
\begin{equation}\label{lem:estimates.EV.cf.1001}
	\begin{aligned}
	N\bigl((-\infty,0); -S(z)\bigr)
	&=N\bigl((-\infty,0); -\Delta(z)+K(z)\bigr)\\
	&=N\bigl((-\infty,0); -\Delta(z)+K_1(z)+K_2(z)\bigr)\\
	&\leq N\bigl((-\infty,0); -\Delta(z)+K_2(z)\bigr)+2.
	\end{aligned} 
\end{equation}
Hence, in view of Lemma~\ref{lem:EV.count.1}, it suffices to 
show that 
\begin{equation}\label{lem:estimates.EV.cf.1002}
	N\bigl((-\infty,0); -\Delta(\omega_1+E)+
		K_2(\omega_1+E)\bigr)
		<\infty.
\end{equation}
Let $\Omega$ denote the complement of the level set of $\omega$ 
corresponding to $\omega_1$, that is, 
$\Omega:=\{k\in\RR^d: \omega(k)\neq \omega_1\}$.
In view of our hypothesis on the dispersion relation $\omega$, 
it is clear that $\Omega$ is an open subset of $\RR^d$ with 
positive Lebesgue measure. For 
$z\in[\omega_1+E,2\omega_0+\gamma)$, we denote 
by $\Delta_{\Omega}(z)$ and $K_{2,\Omega}(z)$ the restrictions 
of the operators $\Delta(z)$ and $K_{2}(z)$ to $L^2(\Omega)$, 
respectively. Further, we denote by $\Delta_{\Omega}(\mydot;z)$ 
the restriction of the function $\Delta(\mydot;z)$ to $\Omega$ 
so that $\Delta_{\Omega}(z)$ is the operator of multiplication 
by the function $\Delta_{\Omega}(\mydot;z)$ in $L^2(\Omega)$. 
Since $E$ is a zero of $\Phi$, by expressing 
$-\Delta(k;\omega_1+E)$ as $-\Delta(k;\omega_1+E)+\Phi(E)$ and 
doing some elementary calculations, we get the identity
\begin{equation*}
	-\Delta(k;\omega_1+E)=(\omega_1-\omega(k))
	\Bigg(1+\alpha^2\int\frac{|\la(q)|^2\,\d q}
	{(\omega(q)+\gamma-E)(\omega(k)+\omega(q)+\gamma-\omega_1-E)}
	\Bigg).
\end{equation*}
This implies that 
\begin{equation}\label{Delta.sim.omega}
	-\Delta_{\Omega}(k;\omega_1+E)\geq \omega_1-\omega(k)>0 
	\quad \text{for all} \quad k\in\Omega
\end{equation}
and that $\Delta(\mydot;\omega_1+E)\equiv0$ on the level set 
of $\omega$ corresponding to $\omega_1$. So the restriction 
of $\Delta(\omega_1+E)$ to $L^2(\RR^d\setminus\Omega)$ is the 
zero operator. Moreover, it is easy to see from 
\eqref{kernel:k_1}-\eqref{kernel:k_2} that the restriction of 
$K_2(\omega_1+E)$ to $L^2(\RR^d\setminus\Omega)$ is the zero 
operator, too. That is why \eqref{lem:estimates.EV.cf.1002} 
is equivalent to 
\begin{equation}\label{lem:estimates.EV.cf.1012}
	N\bigl((-\infty,0); -\Delta_{\Omega}(\omega_1+E)
		+K_{2,\Omega}(\omega_1+E)\bigr)<\infty.
\end{equation}
To show the latter, first we recall the monotonicity of 
the function $\Phi$ in the interval 
$(\omega_1+E, 2\omega_0+\gamma)$. For all 
$z\in (\omega_1+E, 2\omega_0+\gamma)$, this implies that
\begin{equation}
	\sup_{k\in\Omega}\Delta_{\Omega}(k;z)\leq
	\sup_{k\in\RR^d}\Delta(k;z)=\Phi(z-\omega_1)<\Phi(E)=0.
\end{equation}
Hence, the multiplication operator $-\Delta_{\Omega}(z)$ is 
positive for all $z\in(\omega_1+E,2\omega_0+\gamma)$. Since 
the integral operator $K_{2,\Omega}(z):L^2(\Omega)\to L^2(\Omega)$ 
is well-defined and Hilbert-Schmidt, it follows that 
the operator 
\begin{equation}\label{B.Sch.op.T.eps}
	T(z):=\bigl(-\Delta_{\Omega}(z)\bigr)^{-1/2}
		K_{2,\Omega}(z)
		\bigl(-\Delta_{\Omega}(z)\bigr)^{-1/2}
\end{equation}
is also well-defined and Hilbert-Schmidt for all 
$z\in(\omega_1+E,2\omega_0+\gamma)$. 

Next, let us define $T(\omega_1+E)$ to be the integral operator 
in $L^2(\Omega)$ with the kernel
\begin{equation}\label{kern.sing.E.1}
	\Theta(k_1,k_2):=
		\frac{\alpha^2\ov{\la(k_1)}\la(k_2)
			\Psi_2(k_1,k_2;\omega_1+E)}
		{\sqrt{-\Delta_{\Omega}(k_1;\omega_1+E)}
			\sqrt{-\Delta_{\Omega}(k_2;\omega_1+E)}
		}, \quad k_1,k_2\in\Omega.
\end{equation}
Let $k_1, k_2\in\Omega$ be fixed for a moment and let 
$a=\omega(k_1)-\omega_1$, $b=\omega(k_2)-\omega_1$ and 
$c=\omega_1+\gamma-E$. Then it follows that 
$0<a+b+c\leq 2c$, $\min\{a+c, b+c\}\geq \omega_0+\gamma-E$, 
$a+b+c\geq 2\omega_0+\gamma-\omega_1-E$ and 
$\sqrt{ab}\leq-\frac{a+b}{2}\leq \omega_1-\omega_0$. 
Therefore, we get
\begin{equation}\label{p-wise.estimate.on.Psi2}
	\begin{aligned}
	0\leq\Psi_2(k_1,k_2;\omega_1+E)
	&=\frac{1}{a+b+c}-\frac{1}{a+c}-\frac{1}{b+c}+\frac{1}{c}\\
	&=\frac{ab(a+b+2c)}{c(a+c)(b+c)(a+b+c)}\\
	&\leq \frac{\sqrt{ab}(-a-b)}{(a+c)(b+c)(a+b+c)}\\
	&\leq\frac{2(\omega_1-\omega_0)\sqrt{\omega_1-\omega(k_1)}
		\sqrt{\omega_1-\omega(k_2)}}
			{(\omega_0+\gamma-E)^2(2\omega_0+\gamma-\omega_1-E)}.
	\end{aligned}
\end{equation}
Combining \eqref{p-wise.estimate.on.Psi2} and 
\eqref{Delta.sim.omega}, we thus obtain the pointwise estimate
\begin{equation}
	|\Theta(k_1,k_2)| \leq 
		\frac{2(\omega_1-\omega_0)|\la(k_1)||\la(k_2)|}
		{(\omega_0+\gamma-E)^2(2\omega_0+\gamma-\omega_1-E)}, 
		\quad k_1,k_2\in\Omega.
\end{equation}
Thus we infer from the first condition in 
\eqref{cond:la.la.omega.in.L2} that 
$\Theta\in L^2(\Omega\times\Omega)$. 
Therefore, $T(\omega_1+E):L^2(\Omega)\to L^2(\Omega)$ 
is a Hilbert-Schmidt operator and the Lebesgue's dominated 
convergence theorem guarantees the right-continuity 
(with respect to the operator norm) of the operator 
function $T$ at $\omega_1+E$. This observation and the 
Weyl inequality (\cf~\cite[Chapter~IX]{Birman-Solomjak-87b})
\begin{equation}
	N\bigl((-\infty,-1);T(z)\bigr)\leq 
	N\bigl((-\infty, -0.5); T(\omega_1+E)\bigr)
	+N\bigl((-\infty, -0.5);-T(\omega_1+E)+T(z)\bigr),
\end{equation}
together with the right-continuity 
(with respect to the operator norm) of the operator 
functions $\Delta_{\Omega}$ and $K_{2,\Omega}$ 
yield 
\begin{equation}
	\begin{aligned}
	N\bigl((-\infty, 0);-\Delta_{\Omega}(\omega_1+E)
		+K_{2, \Omega}(\omega_1+E)\bigr)
	&=\lim_{z\downarrow \omega_1+E}
		N\bigl((-\infty, 0); -\Delta_{\Omega}(z)+K_{2,\Omega}(z)\bigr)\\
	&=\lim_{z\downarrow\omega_1+E}N\bigl((-\infty,-1);T(z)\bigr)\\
	&\leq N\bigl((-\infty,-0.5); T(\omega_1+E)\bigr).
	\end{aligned}
\end{equation}
However, $N\bigl((-\infty,-0.5); T(\omega_1+E)\bigr)$ must be a finite 
number because of the compactness of the operator 
$T(\omega_1+E)$, thus justifying 
\eqref{lem:estimates.EV.cf.1012}.
\end{proof}
%
%
%
%
\begin{theorem}\label{thm:disc.spec.2}
Let the coupling constant $\alpha>0$ be arbitrary. 
\begin{enumerate}[\upshape i)]
\item If $2\omega_0+\gamma$ (resp.~$2\omega_1+\gamma$) is the 
bottom (resp.~top) of the essential spectrum, then it can not 
be an accumulation point of the discrete spectrum of the 
operator matrix $\cA$ provided that 
\begin{equation}\label{cond.strong.on.la.and.bounded.omega.M}
	\frac{\la}{\omega-\omega_0}\in L^2(\RR^d) \quad 
	\bigg(resp.\;\frac{\la}{\omega_1-\omega}\in L^2(\RR^d)\bigg).
\end{equation} 	
\item If $2\omega_0+\gamma$ (resp.~$2\omega_1+\gamma$) is an 
edge but not bottom (resp.~top) of the essential spectrum, 
then it can not be an accumulation point of the discrete 
spectrum of the operator matrix $\cA$ provided that
\begin{equation}\label{cond.strong.on.la.and.bounded.omega.m.M}
	\frac{\la}{\sqrt{(\omega-\omega_0)(\omega_1-\omega)}}
		\in L^2(\RR^d).
\end{equation}	 
\end{enumerate}
\end{theorem}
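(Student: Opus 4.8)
The plan is to treat the two edges $2\omega_0+\gamma$ and $2\omega_1+\gamma$ of the three-particle branch $\sigma(D)$ by the Frobenius--Schur/Birman--Schwinger mechanism underlying Lemma~\ref{lem:EV.count.1}: since each such edge lies on the boundary of $\rho(D)$, one works at points $z\in\rho(D)$ in the adjacent spectral gap and shows that the relevant count $N\bigl((-\infty,0);S(z)\bigr)$ or $N\bigl((-\infty,0);-S(z)\bigr)$ stays bounded as $z$ runs up to the edge; because, by \eqref{Frobenius-Schur} and the monotonicity $\partial_zS(z)\le-I$, this count majorises the number of eigenvalues of $\cA$ trapped between $z$ and the edge, a uniform bound excludes accumulation. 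I discuss $2\omega_0+\gamma$; the edge $2\omega_1+\gamma$ is identical after replacing $(\omega_0,E,\alpha_1,\omega-\omega_0)$ throughout by $(\omega_1,F,\alpha_3,\omega_1-\omega)$ and approaching the edge from above.

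\emph{Part \upshape(i).} If $2\omega_0+\gamma$ is the bottom of $\sess(\cA)$ then, by the discussion preceding Theorem~\ref{thm:ess.spec.A}, \eqref{strong.ultrared.regularity.m} holds and $\Phi(\omega_0+\gamma)\ge0$; hence for every $z<2\omega_0+\gamma$ we have $z\in\rho(D)$ and $\inf_k\Delta(k;z)=\Phi(z-\omega_0)>\Phi(\omega_0+\gamma)\ge0$, so $\Delta(z)$ is strictly positive, and since $K(z)$ is compact (Remark~\ref{rem:on.Delta.and.K}) one has $\sess(S(z))=\sess(\Delta(z))\subset(0,\infty)$, whence $N\bigl((-\infty,0);S(z)\bigr)<\infty$ for each such $z$. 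It remains to bound this uniformly. As $\Delta(z)>0$, the Birman--Schwinger principle gives $N\bigl((-\infty,0);S(z)\bigr)=N\bigl((1,\infty);\Delta(z)^{-1/2}K(z)\Delta(z)^{-1/2}\bigr)$, which is at most the square of the Hilbert--Schmidt norm of $\Delta(z)^{-1/2}K(z)\Delta(z)^{-1/2}$. Evaluating $\Delta$ at the edge, the relation $\Phi(\omega_0+\gamma)\ge0$ gives, after an elementary rearrangement (in the spirit of the identity used in the proof of Theorem~\ref{thm:disc.spec.1}), $\Delta(k;2\omega_0+\gamma)\ge(\omega(k)-\omega_0)\bigl(1+\alpha^2\!\int\!\tfrac{|\la(q)|^2\,\d q}{(\omega(q)-\omega_0)(\omega(k)+\omega(q)-2\omega_0)}\bigr)\ge\omega(k)-\omega_0$, and $\partial_z\Delta(k;z)\le-1$ upgrades this to $\Delta(k;z)\ge(\omega(k)-\omega_0)+(2\omega_0+\gamma-z)\ge\omega(k)-\omega_0$ for all $z<2\omega_0+\gamma$. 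Combined with $\omega(k)+\omega(q)+\gamma-z\ge(\omega(k)-\omega_0)+(\omega(q)-\omega_0)$, the kernel of $\Delta(z)^{-1/2}K(z)\Delta(z)^{-1/2}$ is dominated, uniformly in $z<2\omega_0+\gamma$, by $\alpha^{2}|\la(k)||\la(q)|\bigl[(\omega(k)-\omega_0)(\omega(q)-\omega_0)\bigr]^{-1}$, whose square integrates to $\alpha^{4}\bigl\|\tfrac{\la}{\omega-\omega_0}\bigr\|^{4}_{L^2(\RR^d)}$, finite by the first condition in \eqref{cond.strong.on.la.and.bounded.omega.M}. This is the required uniform bound, so $\cA$ has only finitely many eigenvalues below $2\omega_0+\gamma$ and part~\upshape(i) follows.

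\emph{Part \upshape(ii).} If $2\omega_0+\gamma$ is an edge but not the bottom, Theorem~\ref{thm:ess.spec.A} shows the adjacent gap is $(\omega_1+E,2\omega_0+\gamma)$ and that $\Phi(2\omega_0-\omega_1+\gamma)<0$. For $z$ in this gap $\sup_k\Delta(k;z)=\Phi(z-\omega_1)\le\Phi(2\omega_0-\omega_1+\gamma)<0$, so $-\Delta(z)$ is bounded below by a positive constant \emph{uniformly} as $z\uparrow2\omega_0+\gamma$. Working with $-S(z)=-\Delta(z)+K(z)$, Birman--Schwinger gives $N\bigl((-\infty,0);-S(z)\bigr)=N\bigl((1,\infty);(-\Delta(z))^{-1/2}K(z)(-\Delta(z))^{-1/2}\bigr)\le c\,\|p(\mydot,\mydot;z)\|^2_{L^2(\RR^d\times\RR^d)}$ for a fixed constant $c$, and $\|p(\mydot,\mydot;z)\|^2_{L^2(\RR^d\times\RR^d)}\uparrow\alpha^{4}\!\iint\tfrac{|\la(k)|^2|\la(q)|^2\,\d k\,\d q}{(\omega(k)+\omega(q)-2\omega_0)^2}\le\tfrac{\alpha^{4}}{4}\bigl\|\tfrac{\la}{\sqrt{\omega-\omega_0}}\bigr\|^{4}_{L^2(\RR^d)}<\infty$, since \eqref{cond.strong.on.la.and.bounded.omega.m.M} forces both $\tfrac{\la}{\sqrt{\omega-\omega_0}}$ and $\tfrac{\la}{\sqrt{\omega_1-\omega}}$ to lie in $L^2(\RR^d)$. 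Thus $N\bigl((-\infty,0);-S(z)\bigr)$ is uniformly bounded and, as in part~\upshape(i), $\cA$ has only finitely many eigenvalues in the gap, so $2\omega_0+\gamma$ is not an accumulation point of $\sd(\cA)$. (One may note in passing that $K(z)\ge0$ throughout $(-\infty,2\omega_0+\gamma)$, so here in fact $-S(z)>0$ and the gap contains no spectrum of $\cA$ at all; the estimate above is phrased so that it transfers verbatim to $2\omega_1+\gamma$, where instead $K(z)\le0$ on $(2\omega_1+\gamma,\infty)$.)

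The single genuinely delicate step is the uniform estimate in part~\upshape(i): as $z\uparrow2\omega_0+\gamma$ the multiplication operator $\Delta(z)$ ceases to be uniformly positive -- in the limit it vanishes on the level set $\{\omega=\omega_0\}$, which occurs precisely when $\alpha=\alpha_1$ -- so $\Delta(z)^{-1/2}$ develops a $(\omega-\omega_0)^{-1/2}$ singularity there, and squeezing the factor $(\omega(k)+\omega(q)-2\omega_0)^{-1}$ of $K(z)$ between two such factors produces the kernel $\la(k)\la(q)\bigl[(\omega(k)-\omega_0)(\omega(q)-\omega_0)\bigr]^{-1}$; square-integrability of this kernel is exactly the infrared hypothesis \eqref{cond.strong.on.la.and.bounded.omega.M}. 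In part~\upshape(ii) this obstruction is absent because the relevant multiplication operator stays uniformly definite on the gap, so only the weaker Hilbert--Schmidt control of $K(z)$ granted by \eqref{cond.strong.on.la.and.bounded.omega.m.M} is needed, and the limit passage is routine.
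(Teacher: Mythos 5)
Your proof is correct; part (i) is essentially the paper's argument in a different packaging, while part (ii) takes a genuinely different and in fact stronger route. In part (i) you use the same ingredients as the paper --- the rearrangement $\Delta(k;2\omega_0+\gamma)-\Phi(\omega_0+\gamma)=(\omega(k)-\omega_0)\bigl(1+\dots\bigr)$ giving $\Delta(k;2\omega_0+\gamma)\ge\omega(k)-\omega_0$, the mean inequality \eqref{ineq:AM-GM}, and the Birman--Schwinger operator $\Delta(z)^{-1/2}K(z)\Delta(z)^{-1/2}$ --- but you propagate the edge bound to all $z$ below the edge via $\partial_z\Delta(k;z)\le-1$ and conclude with a single $z$-uniform Hilbert--Schmidt estimate, whereas the paper defines the limiting operator $T$ at the edge, proves it is Hilbert--Schmidt and norm-continuous there, and transfers finiteness to nearby $z$ by the Weyl inequality. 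The two are equivalent in substance; yours is slightly more economical. In part (ii) the paper keeps the singular factor $(\omega(k)+\omega(q)-2\omega_1)^{-1}$ inside the weighted kernel \eqref{kern.sing.2M.m} and needs the full hypothesis \eqref{cond.strong.on.la.and.bounded.omega.m.M} to make it square-integrable; you instead observe that on the gap adjacent to the edge the relevant multiplication operator ($-\Delta(z)$ below $2\omega_0+\gamma$, $\Delta(z)$ above $2\omega_1+\gamma$) is \emph{uniformly} positive definite, so only a uniform Hilbert--Schmidt bound on $K(z)$ itself is required, and that already follows from $\la/\sqrt{\omega-\omega_0}\in L^2(\RR^d)$ (resp.\ $\la/\sqrt{\omega_1-\omega}\in L^2(\RR^d)$), a consequence of the stated hypothesis. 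Your parenthetical remark deserves emphasis: since $1/(\omega(k)+\omega(q)+\gamma-z)=\int_0^\infty e^{-t(\omega(k)+(\gamma-z)/2)}e^{-t(\omega(q)+(\gamma-z)/2)}\,\dd t$ for $z<2\omega_0+\gamma$, the operator $K(z)$ is positive semidefinite there (and negative semidefinite for $z>2\omega_1+\gamma$), so $S(z)=\Delta(z)-K(z)$ is sign-definite with a spectral gap at $0$ and hence boundedly invertible throughout the gaps $(\omega_1+E,2\omega_0+\gamma)$ and $(2\omega_1+\gamma,\omega_0+F)$; by the factorization \eqref{Frobenius-Schur} these gaps then contain no spectrum of $\cA$ at all, so part (ii) actually holds with no hypothesis beyond $\la\in L^2(\RR^d)$ --- a strengthening the paper's argument does not reveal. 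Two small points: the Birman--Schwinger count in your part (ii) should read $N\bigl((-\infty,-1);(-\Delta(z))^{-1/2}K(z)(-\Delta(z))^{-1/2}\bigr)$ rather than $N\bigl((1,\infty);\mydot\bigr)$ (immaterial for the Hilbert--Schmidt bound), and the passage from a $z$-uniform bound on $N\bigl((-\infty,0);\pm S(z)\bigr)$ to non-accumulation at a point of $\partial\sigma(D)$ rests on the monotone spectral-flow content behind Lemma~\ref{lem:EV.count.1}; you invoke this correctly and at the same level of detail as the paper itself.
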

\begin{proof}
i) We give a proof of the claim about the absence of the 
discrete eigenvalue accumulation at $2\omega_1+\gamma$ from 
the right. The other claim can be justified analogously. 
Let the parameters $\alpha$ and $\gamma$ be such that 
$2\omega_1+\gamma$ is the top of the essential spectrum. 
Then the second condition in 
\eqref{cond.strong.on.la.and.bounded.omega.M} and the 
Cauchy-Schwarz inequality guarantees that 
$\Phi(\omega_1+\gamma)$ is finite, see \eqref{ineq.CS.infr}. 
Since $2\omega_1+\gamma$ is the top of the essential 
spectrum, we have $\Phi(\omega_1+\gamma)\leq0$. Hence, we 
easily obtain
\begin{equation}
	\begin{aligned}
	\Delta(k;2\omega_1+\gamma) 
	&\leq\Delta(k;2\omega_1+\gamma)-\Phi(\omega_1+\gamma)\\
	&=(\omega(k)-\omega_1)
	\Bigg(1+\alpha^2\int\frac{|\la(q)|^2\,\d q}
	{(\omega_1-\omega(q))(2\omega_1-\omega(k)-\omega(q))}
		\Bigg),
	\end{aligned}
\end{equation}
and consequently, 
\begin{equation}\label{Delta=omega.2M}
	-\Delta(k;2\omega_1+\gamma)\geq\omega_1-\omega(k), 
		\quad k\in\RR^d.
\end{equation}
By the monotonicity of the function $\Phi$ in the interval 
$(2\omega_1+\gamma,\infty)$, we get
\begin{equation}
	\sup_{k\in\RR^d}\Delta(k;z)=\Phi(z-\omega_1)
		<\Phi(\omega_1+\gamma)\leq0
\end{equation}
for all $z\in (2\omega_1+\gamma, \infty)$. Hence, the 
multiplication operator 
$-\Delta(z)\colon L^2(\RR^d)\to L^2(\RR^d)$ is positive 
for all $z\in (2\omega_1+\gamma, \infty)$. Since the integral 
operator $K(z)\colon L^2(\RR^d)\to L^2(\RR^d)$ is well-defined 
and Hilbert-Schmidt, it follows that the operator 
\begin{equation}\label{B.Sch.op.T.2M}
	T(z):=\bigl(-\Delta(z)\bigr)^{-1/2}
		K(z)\bigl(-\Delta(z)\bigr)^{-1/2}
\end{equation}
is also well-defined and Hilbert-Schmidt for all 
$z\in(2\omega_1+\gamma, \infty)$. 

\smallskip
\noindent
Next, we define $T(2\omega_1+\gamma)$ to be the integral 
operator in $L^2(\RR^d)$ with the kernel
\begin{equation}\label{kern.sing.2M}
	\Theta(k_1,k_2):= \frac{\alpha^2\ov{\la(k_1)}\la(k_2)}
	{\sqrt{-\Delta(k_1;2\omega_1+\gamma)}
		(\omega(k_1)+\omega(k_2)-2\omega_1)
		\sqrt{-\Delta(k_2;2\omega_1+\gamma)}}.
\end{equation}
Using \eqref{Delta=omega.2M} and the elementary 
mean-inequality
\begin{equation}\label{ineq:AM-GM}
	\frac{1}{2}\bigl(2\omega_1-\omega(k_1)-\omega(k_2)\bigr) 
	\geq \sqrt{(\omega_1-\omega(k_1))(\omega_1-\omega(k_2))}, 
\end{equation} 
we can estimate the kernel in \eqref{kern.sing.2M} as follows
\begin{equation}
	|\Theta(k_1,k_2)|\leq\frac{\alpha^2}{2}
	\frac{|\la(k_1)|}{\omega_1-\omega(k_1)}
	\frac{|\la(k_2)|}{\omega_1-\omega(k_2)},
	\quad k_1,k_2\in\RR^d.
\end{equation}
In view of the second condition in 
\eqref{cond.strong.on.la.and.bounded.omega.M}, we thus conclude 
that $\Theta\in L^2(\RR^d\times\RR^d)$ with
\begin{equation}
	\ds\|\Theta\|_{L^2(\RR^d\times\RR^d)}\leq 
	\frac{\alpha^2}{2}\Bigl\|\frac{\la}{\omega_1-\omega}
		\Bigr\|^2_{L^2(\RR^d)}<\infty.
\end{equation}
Therefore, $T(2\omega_1+\gamma)\colon L^2(\RR^d)\to L^2(\RR^d)$ 
is a Hilbert-Schmidt operator and the Lebesgue's dominated 
convergence theorem guarantees the right-continuity 
(with respect to the operator norm) of the operator function 
$T$ at $2\omega_1+\gamma$. This observation and the Weyl 
inequality (see \cite[Chapter~IX]{Birman-Solomjak-87b}) 
\begin{equation}
	N\bigl((-\infty,-1);T(z)\bigr)
		\leq N\bigl((-\infty,-0.5);T(2\omega_1+\gamma)\bigr)
			+N\bigl((-\infty,-0.5);-T(2\omega_1+\gamma)+T(z)\bigr)
\end{equation}
together with the right-continuity 
(with respect to the operator norm) of the operator 
functions $\Delta$ and $K$ yield 
\begin{equation}
	\begin{aligned}
	N\bigl((-\infty,0); -S(2\omega_1+\gamma)\bigr)
	&=N\bigl((-\infty,0);-\Delta(2\omega_1+\gamma)
		+K(2\omega_1+\gamma)\bigr)\\
	&=\lim_{z\downarrow 2\omega_1
		+\gamma}N\bigl((-\infty,0);-\Delta(z)+K(z)\bigr)\\
	&=\lim_{z\downarrow 2\omega_1+\gamma}N\bigl((-\infty,-1);T(z)\bigr)\\
	&\leq N\bigl((-\infty,-0.5);T(2\omega_1+\gamma)\bigr).
	\end{aligned}
\end{equation}
However, $N\bigl((-\infty,-0.5);T(2\omega_1+\gamma)\bigr)$ must be 
a finite number because of the compactness of the operator 
$T(2\omega_1+\gamma)$. Now Lemma~\ref{lem:EV.count.1} finishes 
the proof.

\medskip
\noindent
ii) Here we give a sketch of the proof of the claim 
about the absence of the discrete eigenvalue accumulation 
at $2\omega_1+\gamma$ from the right. The other claim can 
be justified analogously. Let the parameters $\alpha$ and 
$\gamma$ be such that $2\omega_1+\gamma$ is an edge but 
not the top of the essential spectrum. Then, we have 
$\Phi(2\omega_1-\omega_0+\gamma)\geq0$ (that is, 
$\alpha\in(0,\alpha_2]$, see Theorem~\ref{thm:ess.spec.A}). 
Hence, we easily obtain
\begin{equation}
	\begin{aligned}
	\Delta(k;2\omega_1+\gamma)&\geq\Delta(k;2\omega_1+\gamma)
		-\Phi(2\omega_1-\omega_0+\gamma)\\
	&=(\omega(k)-\omega_0)\Bigg(
		1+\alpha^2\int\frac{|\la(q)|^2\,\d q}
			{(2\omega_1-\omega_0-\omega(q))
				(2\omega_1-\omega(k)-\omega(q))}\Bigg)
	\end{aligned}
\end{equation}
and consequently, 
\begin{equation}\label{Delta=omega.2M.m}
	\Delta(k;2\omega_1+\gamma) \geq \omega(k)-\omega_0, 
		\quad k\in\RR^d.
\end{equation}
By the monotonicity of the function $\Phi$ in the interval 
$(2\omega_1+\gamma, \omega_0+F)$, we have
\begin{equation}
	\inf_{k\in\RR^d}\Delta(k;z)=\Phi(z-\omega_0)
		>\Phi(F)=0
\end{equation}
for all $z\in(2\omega_1+\gamma, \omega_0+F)$. Hence, the 
multiplication operator 
$\Delta(z)\colon L^2(\RR^d)\to L^2(\RR^d)$ 
is positive for all $z\in(2\omega_1+\gamma,\omega_0+F)$. 
Since the integral operator $K\colon L^2(\RR^d)\to L^2(\RR^d)$ 
is well-defined and Hilbert-Schmidt, it follows that the 
operator 
\begin{equation}\label{B.Sch.op.T.2Mm}
	T(z):=\bigl(\Delta(z)\bigr)^{-1/2}K(z)
		\bigl(\Delta(z)\bigr)^{-1/2}
\end{equation}
is also well-defined and Hilbert-Schmidt for all 
$z\in(2\omega_1+\gamma, \omega_0+F)$. 

\smallskip
\noindent
Next, we define $T(2\omega_1+\gamma)$ to be the integral 
operator in $L^2(\RR^d)$ with the kernel
\begin{equation}\label{kern.sing.2M.m}
	\Theta(k_1,k_2):=\frac{\alpha^2\ov{\la(k_1)}\la(k_2)}
	{\sqrt{\Delta(k_1;2\omega_1+\gamma)}
		(\omega(k_1)+\omega(k_2)-2\omega_1)
		\sqrt{\Delta(k_2;2\omega_1+\gamma)}}.
\end{equation}
Using \eqref{Delta=omega.2M.m} and \eqref{ineq:AM-GM}, we 
get the pointwise estimate 
\begin{equation}
	|\Theta(k_1,k_2)|\leq\frac{\alpha^2}{2}
	\frac{|\la(k_1)|}
	{\sqrt{(\omega(k_1)-\omega_0)(\omega_1-\omega(k_1))}}
	\frac{|\la(k_2)|}
	{\sqrt{(\omega(k_2)-\omega_0)(\omega_1-\omega(k_2))}}.
\end{equation}
It thus follows from the condition 
\eqref{cond.strong.on.la.and.bounded.omega.m.M} that 
$\Theta\in L^2(\RR^d\times\RR^d)$ with
\begin{equation}
	\ds\|\Theta\|_{L^2(\RR^d\times\RR^d)} 
	\leq \frac{\alpha^2}{2}\bigg\|
		\frac{\la}{\sqrt{(\omega-m)(M-\omega)}}
		\bigg\|^2_{L^2(\RR^d)}<\infty.
\end{equation}
Therefore, $T(2\omega_1+\gamma)\colon L^2(\RR^d)\to L^2(\RR^d)$ 
is a Hilbert-Schmidt operator and the Lebesgue's dominated 
convergence theorem guarantees the right-continuity 
(with respect to the operator norm) of the operator 
function $T$ at $2\omega_1+\gamma$. The rest of the proof 
is completely analogous to that of part i) and follows by 
applying Weyl's inequality together with 
Lemma~\ref{lem:EV.count.1}. 
\end{proof}
\begin{remark}
\begin{enumerate}[\upshape (i)]
\item What we really need in the proof of 
Theorem~\ref{thm:disc.spec.2}ii) is the square-integrability of 
\begin{equation}
	(k_1,k_2) \mapsto 
	\frac{\la(k_1)}{\sqrt{\omega(k_1)-\omega_0}}
	\frac{1}{\omega(k_1)+\omega(k_2)-2\omega_1}
	\frac{\la(k_2)}{\sqrt{\omega(k_2)-\omega_0}}
\end{equation}
over $\RR^d\times\RR^d$. This requirement is weaker than the 
condition \eqref{cond.strong.on.la.and.bounded.omega.m.M}.
\item The conditions in 
\eqref{cond.strong.on.la.and.bounded.omega.M} 
are stronger versions of the conditions in 
\eqref{strong.ultrared.regularity.M} and 
\eqref{strong.ultrared.regularity.m}. For instance,  
if the first condition in 
\eqref{cond.strong.on.la.and.bounded.omega.M} holds, then 
the Cauchy-Schwarz inequality yields 
\begin{equation}
	\bigg\|\frac{\la}{\sqrt{\omega-\omega_0}}
		\bigg\|^2_{L^2(\RR^d)}
	\leq\|\la\|_{L^2(\RR^d)}
	\bigg\|\frac{\la}{\omega-\omega_0}
		\bigg\|_{L^2(\RR^d)}<\infty.
\end{equation}
%
\item The way we employ the Weyl inequality in the proofs 
of Theorems~\ref{thm:disc.spec.1} and \ref{thm:disc.spec.2} 
is inspired by \cite{Lakaev-Muminov-2003, Ibrogimov-Tretter-OTAA2018}.
\end{enumerate}
\end{remark}
\newpage
\begin{corollary}\label{cor.disc.spec}
Let the coupling constant $\alpha>0$ be arbitrary. Assume 
that both of the following conditions are satisfied
\begin{equation}\label{the.two.infrared.reg.conditions}
	\frac{\la}{\omega_1-\omega}\in L^2(\RR^d), \quad 
	\frac{\la}{\omega-\omega_0}\in L^2(\RR^d).
\end{equation}
Then the discrete spectrum of the operator matrix $\cA$ 
is finite.
\end{corollary}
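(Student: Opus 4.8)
The plan is to deduce the corollary directly from the non-accumulation results of Theorems~\ref{thm:disc.spec.1} and \ref{thm:disc.spec.2}, using Theorem~\ref{thm:ess.spec.A} to enumerate the finitely many candidate accumulation points. The starting observation is that $\cA$ is bounded and self-adjoint, so $\sigma(\cA)$ is a compact subset of $\RR$ and $\sd(\cA)$ is a bounded set of isolated eigenvalues of finite multiplicity contained in $\RR\setminus\sess(\cA)$. Hence, were $\sd(\cA)$ infinite, it would possess an accumulation point $z_*$; since $z_*\in\sigma(\cA)$ and points of $\sd(\cA)$ are isolated in $\sigma(\cA)$, we must have $z_*\in\sess(\cA)$, and since the discrete eigenvalues accumulating at $z_*$ lie in $\RR\setminus\sess(\cA)$, approach $z_*$ from one side, and $\sess(\cA)$ is (by Theorem~\ref{thm:ess.spec.A}) a finite union of closed intervals, $z_*$ must be an edge of $\sess(\cA)$.

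Next I would read off from Theorem~\ref{thm:ess.spec.A} that in each of its cases the edges of $\sess(\cA)$ belong to the finite set $\{\omega_0+E,\ \omega_1+E,\ \omega_0+F,\ \omega_1+F,\ 2\omega_0+\gamma,\ 2\omega_1+\gamma\}$. Thus it suffices to rule out each of these six points as an accumulation point of $\sd(\cA)$. For $\omega_0+E$, $\omega_1+E$, $\omega_0+F$ and $\omega_1+F$ this is exactly Theorem~\ref{thm:disc.spec.1}, which holds for arbitrary $\alpha>0$ with no additional hypothesis. For $2\omega_0+\gamma$ and $2\omega_1+\gamma$ I would invoke Theorem~\ref{thm:disc.spec.2}: when such a point is the bottom (resp.\ top) of $\sess(\cA)$ apply part i), when it is an edge but not the bottom (resp.\ top) apply part ii), and when it is an interior point of $\sess(\cA)$ there is nothing to prove.

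It remains to verify that the hypotheses \eqref{the.two.infrared.reg.conditions} supply the assumptions needed in Theorem~\ref{thm:disc.spec.2}. Part i) requires precisely \eqref{cond.strong.on.la.and.bounded.omega.M}, which is \eqref{the.two.infrared.reg.conditions} itself. Part ii) requires \eqref{cond.strong.on.la.and.bounded.omega.m.M}, namely $\la/\sqrt{(\omega-\omega_0)(\omega_1-\omega)}\in L^2(\RR^d)$; but this follows from \eqref{the.two.infrared.reg.conditions} by the Cauchy--Schwarz inequality, since
\[
	\int\frac{|\la(q)|^2\,\d q}{(\omega(q)-\omega_0)(\omega_1-\omega(q))}
	\leq
	\Bigl\|\frac{\la}{\omega-\omega_0}\Bigr\|_{L^2(\RR^d)}\,
	\Bigl\|\frac{\la}{\omega_1-\omega}\Bigr\|_{L^2(\RR^d)}<\infty .
\]
Combining these, no edge of $\sess(\cA)$ can be an accumulation point of $\sd(\cA)$, so $\sd(\cA)$ is finite. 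The only genuine work is organisational: matching each regime in Theorem~\ref{thm:ess.spec.A} to the appropriate clause of Theorems~\ref{thm:disc.spec.1} and \ref{thm:disc.spec.2}, and I expect the main (mild) obstacle to be checking that the two parts of Theorem~\ref{thm:disc.spec.2} together cover \emph{every} position of $2\omega_0+\gamma$ and $2\omega_1+\gamma$ relative to $\sess(\cA)$; the Cauchy--Schwarz bound displayed above is exactly what closes the potential gap between the ``bottom/top'' case and the ``edge but not bottom/top'' case.
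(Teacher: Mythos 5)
Your proposal is correct and follows essentially the same route as the paper: the paper's proof likewise applies Theorem~\ref{thm:disc.spec.1} to the edges $\omega_0+E$, $\omega_1+E$, $\omega_0+F$, $\omega_1+F$ and Theorem~\ref{thm:disc.spec.2} to $2\omega_0+\gamma$ and $2\omega_1+\gamma$, with the Cauchy--Schwarz estimate $\bigl\|\la/\sqrt{(\omega_1-\omega)(\omega-\omega_0)}\bigr\|^2_{L^2}\leq\|\la/(\omega_1-\omega)\|_{L^2}\|\la/(\omega-\omega_0)\|_{L^2}$ serving as the one substantive verification. Your write-up merely spells out the compactness/isolated-eigenvalue argument and the case enumeration that the paper leaves implicit.
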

\begin{proof}
If \eqref{the.two.infrared.reg.conditions} holds, then the 
Cauchy-Schwarz inequality yields 
\begin{equation*}
	\bigg\|\frac{\la}{\sqrt{(\omega_1-\omega)(\omega-\omega_0)}}
		\bigg\|^2_{L^2(\RR^d)}
	\leq
	\bigg\|\frac{\la}{\omega_1-\omega}\bigg\|_{L^2(\RR^d)}
	\bigg\|\frac{\la}{\omega-\omega_0}\bigg\|_{L^2(\RR^d)}
	<\infty.
\end{equation*}
Hence, Theorems~\ref{thm:disc.spec.1} and \ref{thm:disc.spec.2} 
imply the finiteness of the discrete spectrum of $\cA$ in 
all cases.
\end{proof}
%
%
%
\section{Spectrum of the spin-boson Hamiltonian with two bosons}
%
Since $\omega$ is bounded, the natural domain of the unperturbed 
operator $H_0$ coincides with  $\CC^2\otimes\cF^2_s$.
The first condition in \eqref{cond:la.la.omega.in.L2} implies 
the boundedness of the perturbation $H_\alpha-H_0$ and thus 
the expression for $H_{\alpha}$ given in \eqref{Hamiltonian} 
generates a self-adjoint operator in the Hilbert space 
$\CC^2\otimes\cF_s^2$ with the maximal domain 
(\cf~\cite[Theorem~V.4.3]{Kato}). For notational 
convenience, we denote the corresponding self-adjoint operator 
again by~$H_{\alpha}$ and describe its spectrum in the sequel. 

We denote by $\cA^{(+)}_{\alpha}$ and $\cA^{(-)}_{\alpha}$ the 
operator matrices of the form \eqref{op.entr.op.mat.A} 
corresponding to $\gamma=\eps$ and $\gamma=-\eps$, respectively. 
As a simple 
consequence of Theorems~\ref{thm:ess.spec.A}, \ref{thm:disc.spec.1} 
and \ref{thm:disc.spec.2}, the next result describes the essential 
spectrum as well as the finiteness of the discrete spectrum of the 
energy operator $H_\alpha$.
\begin{theorem}\label{thm:spin-boson-spec}
Let the coupling constant $\alpha>0$ be arbitrary. 
\begin{enumerate}[\upshape (i)]
\item\label{thmSB:part.i} 
Then the essential spectrum of $H_\alpha$ is given by
\begin{equation}\label{form.ess.spec.H}
	\sess(H_{\alpha})=
		\sess(\cA^{(+)}_{\alpha})\cup\sess(\cA^{(-)}_{\alpha}) 
\end{equation}
with $\sess(\cA^{\pm}_{\alpha})$ explicitly determined from 
Theorem~{\upshape{\ref{thm:ess.spec.A}}}. 
\item\label{thmSB:part.ii} 
Every edge of the essential spectrum of $H_\alpha$ other than 
$2\omega_0\pm\eps$ and $2\omega_1\pm\eps$ can not be an 
accumulation point of the discrete spectrum of $H_{\alpha}$. 
\item If $2\omega_0\pm\eps$ (resp.~$2\omega_1\pm\eps$) is the 
bottom (resp.~top) of the essential spectrum of 
$\cA^{(\pm)}_\alpha$, then it can not be an accumulation point 
of the discrete spectrum of $H_{\alpha}$ provided that  
\begin{equation}\label{infrared.reg.conditions}
	\frac{\la}{\omega-\omega_0}\in L^2(\RR^d) \quad 
	\bigg(resp.\;\frac{\la}{\omega_1-\omega}\in L^2(\RR^d)\bigg).
\end{equation}
\item If any of $2\omega_0\pm\eps$ (resp.~of $2\omega_1\pm\eps$) 
is an edge but not the bottom (resp.~top) of the essential 
spectrum of $\cA^{(\pm)}_\alpha$, then it can not be an 
accumulation point of the discrete spectrum 
of $H_{\alpha}$ provided that  
\begin{equation}\label{infrared.reg.type.conditions}
	\frac{\la}{\sqrt{(\omega-\omega_0)(\omega_1-\omega)}}
		\in L^2(\RR^d).
\end{equation}
\end{enumerate}
\end{theorem}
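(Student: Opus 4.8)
The plan is to reduce everything to the already-established spectral theory of the reduced operator matrices $\cA^{(\pm)}_\alpha$ via a unitary equivalence that decouples the spin degree of freedom. First I would observe that the map $\sigma\mapsto-\sigma$ is an involution on the discrete variable, so the Hamiltonian $H_\alpha$ in \eqref{Hamiltonian} commutes with the obvious spin-flip symmetry; more precisely, introducing the ``diagonalizing'' change of variables that sends $f=(f^{(+)}_j,f^{(-)}_j)_{j=0,1,2}$ to its symmetric and antisymmetric combinations with respect to $\sigma$ (together with a swap in the $\omega(k)$-shift), one checks by direct substitution in \eqref{Hamiltonian} that $H_\alpha$ is unitarily equivalent to the orthogonal sum of two operators, each acting on $\cF_s^2 = \CC\oplus L^2(\RR^d)\oplus L^2_s(\RR^d\times\RR^d)$. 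The first (``$+$'') summand carries the $+\eps$ shift in the top component and $-\eps$ in the rest (so that after the cosmetic relabelling of the scalar component it becomes $\cA^{(+)}_\alpha$ acting on $\cH$, plus a rank-one/finite-rank piece coming from the $\CC$ component), and likewise the ``$-$'' summand becomes $\cA^{(-)}_\alpha$ up to a finite-rank perturbation. The scalar component in $\cF_s^2$ contributes only a one-dimensional block whose coupling to $L^2(\RR^d)$ is a rank-one operator, hence a finite-rank, in particular compact, perturbation.

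Once this decoupling is in place, part \eqref{thmSB:part.i} follows immediately: $\sess$ is unitarily invariant and insensitive to compact perturbations (Weyl's theorem), so $\sess(H_\alpha)=\sess(\cA^{(+)}_\alpha)\cup\sess(\cA^{(-)}_\alpha)$, and each term is read off from Theorem~\ref{thm:ess.spec.A} with $\gamma=\eps$ and $\gamma=-\eps$ respectively. For parts (ii)--(iv), the key point is that accumulation of discrete eigenvalues of $H_\alpha$ at a point $z_0$ in the resolvent set of the essential spectrum of a given summand can only come from that summand (the other summand is uniformly bounded away from $z_0$ near such a point, or contributes its own essential spectrum there which is then not a gap for $H_\alpha$). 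The compact perturbation by the scalar block changes neither the essential spectrum nor — by a standard argument using the min-max principle and the finite rank of the perturbation — the finiteness of the discrete spectrum in any fixed gap; more carefully, one invokes the fact that a finite-rank perturbation shifts eigenvalue counting functions by a bounded amount, so $N((a,b);\cdot)$ is finite for one operator iff it is finite for the other. Hence each edge of $\sess(H_\alpha)$ is an edge of $\sess(\cA^{(+)}_\alpha)$ or of $\sess(\cA^{(-)}_\alpha)$, and the no-accumulation statements of Theorems~\ref{thm:disc.spec.1} and \ref{thm:disc.spec.2} transfer verbatim. In particular: an edge other than $2\omega_0\pm\eps$, $2\omega_1\pm\eps$ is, for the relevant summand, one of $\omega_0+E$, $\omega_1+E$, $\omega_0+F$, $\omega_1+F$, so Theorem~\ref{thm:disc.spec.1} applies with no further hypothesis, giving (ii); while $2\omega_0\pm\eps$ and $2\omega_1\pm\eps$ are handled by Theorem~\ref{thm:disc.spec.2}i) when they are the bottom/top (needing \eqref{infrared.reg.conditions}) and by Theorem~\ref{thm:disc.spec.2}ii) when they are interior edges (needing \eqref{infrared.reg.type.conditions}), which gives (iii) and (iv).

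The only genuinely technical step is verifying the unitary decoupling and tracking exactly which finite-rank pieces appear, i.e.\ confirming that after the $\sigma\mapsto(\text{sym},\text{antisym})$ transformation the two blocks are $\cA^{(\pm)}_\alpha$ modulo a compact (indeed finite-rank) perturbation supported on the $\CC$-component of $\cF_s^2$. This is the ``main obstacle'' in the sense of being the one place where one must actually compute with \eqref{Hamiltonian} rather than quote a prior theorem, but it is routine: the creation/annihilation structure is linear in the fields and the only off-diagonal couplings in the $\sigma$ variable are the $\alpha\la$ terms, which become, after symmetrization, diagonal in the new discrete label. Everything else — unitary invariance of $\sess$, Weyl's theorem, stability of the finiteness of the discrete spectrum in gaps under finite-rank perturbations — is standard and requires no new input beyond the results already proved for $\cA$. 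I would therefore present the proof as: (1) state and verify the unitary equivalence $H_\alpha\cong(\cA^{(+)}_\alpha\oplus\cA^{(-)}_\alpha)+\text{(finite rank)}$; (2) deduce \eqref{form.ess.spec.H} from Weyl's theorem and Theorem~\ref{thm:ess.spec.A}; (3) deduce (ii)--(iv) by combining the edge identifications from Theorem~\ref{thm:ess.spec.A} with Theorems~\ref{thm:disc.spec.1}, \ref{thm:disc.spec.2} and the finite-rank stability of gap eigenvalue counts.
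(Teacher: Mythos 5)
Your overall strategy is exactly the paper's: block-diagonalize $H_\alpha$ into two summands identified with $\cA^{(+)}_{\alpha}$ and $\cA^{(-)}_{\alpha}$ modulo a finite-rank piece coming from the scalar component of $\cF_s^2$, deduce \eqref{form.ess.spec.H} from the invariance of $\sess$ under finite-rank perturbations, and transfer the no-accumulation statements of Theorems~\ref{thm:disc.spec.1} and \ref{thm:disc.spec.2} using the stability of the finiteness of gap eigenvalue counts under finite-rank perturbations. Your deduction of parts (ii)--(iv) matches the paper's.

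However, the step you yourself single out as the ``main obstacle'' is misdescribed in a way that would fail if carried out literally. $H_\alpha$ does \emph{not} commute with the bare spin flip $f_j^{(\sigma)}\mapsto f_j^{(-\sigma)}$: the diagonal term $\sigma\eps f_j^{(\sigma)}$ changes sign under it. In the symmetric/antisymmetric basis with respect to $\sigma$ alone, the $\alpha\la$ couplings do become diagonal in the new label (as you say), but the $\sigma\eps$ term becomes \emph{off}-diagonal and couples the two sectors, so that transformation does not decouple the operator. The symmetry actually present is the spin flip combined with boson-number parity: every coupling in \eqref{Hamiltonian} simultaneously flips $\sigma$ and changes the particle number by one, so the invariant subspaces are spanned by $\bigl(f_0^{(+)},f_1^{(-)},f_2^{(+)}\bigr)$ and $\bigl(f_0^{(-)},f_1^{(+)},f_2^{(-)}\bigr)$. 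The paper's unitary $U$ is precisely this regrouping --- a permutation of the six components, with no symmetrization at all --- after which each triple equals $P^*\cA^{(\pm)}_{\alpha}P$ plus an operator of rank at most $4$. With that correction the rest of your argument goes through verbatim. Note also that in the block yielding $\cA^{(+)}_{\alpha}$ (i.e.\ $\gamma=\eps$) the $\eps$-shifts on the $0$-, $1$- and $2$-boson components are $+\eps$, $-\eps$, $+\eps$ respectively, not ``$+\eps$ on the top component and $-\eps$ on the rest''.
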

\begin{proof}
Consider the unitary transformation 
$U\colon\CC^2\otimes\cF_s^2\to\cF_s^2\oplus\cF_s^2$, defined by
\begin{equation}
	U\colon
	\begin{pmatrix}
	\begin{pmatrix}
	f_0^{(+)}\\
	f_0^{(-)}
	\end{pmatrix},
	\begin{pmatrix}
	f_1^{(+)}\\
	f_1^{(-)}
	\end{pmatrix}, 
	\begin{pmatrix}
	f_2^{(+)}\\
	f_2^{(-)}
	\end{pmatrix}
	\end{pmatrix}
	\mapsto
	\begin{pmatrix}
	\begin{pmatrix}
	f_0^{(+)}\\
	f_1^{(-)}\\
	f_2^{(+)}
	\end{pmatrix},
	\begin{pmatrix}
	f_0^{(-)}\\
	f_1^{(+)}\\
	f_2^{(-)}
	\end{pmatrix}
	\end{pmatrix}.
\end{equation}
By means of this unitary transformation we can 
block-diagonalize the Hamiltonian $H_{\alpha}$ in 
\eqref{Hamiltonian}. In fact, it is not difficult to check that  
\begin{equation}\label{diagonalization.of.Hamiltonian}
	U^*H_{\alpha}U=
	{\rm{diag}}\{P^*\cA^{(+)}_{\alpha}P, P^*\cA^{(-)}_{\alpha}P\}
		+\text{operator of rank at most 4},
\end{equation}
where $P\colon\cF_s^2\to\cH_1\oplus\cH_2$ is the projection operator 
onto the last two components in the Hilbert space 
$\cF_s^2$, $\cA^{(+)}_{\alpha}$ and $\cA^{(-)}_{\alpha}$ are 
the operator matrices of the form \eqref{op.entr.op.mat.A} with 
$\gamma=\eps$ and $\gamma=-\eps$, respectively. Since the 
essential spectrum  as well as the finiteness of the discrete 
spectrum of self-adjoint operators are invariant with respect 
to finite-rank perturbations 
(\cf~\cite[Chapter~9]{Birman-Solomjak-87b}), it follows from 
\eqref{diagonalization.of.Hamiltonian} that the essential 
spectrum of $H_\alpha$ is given by \eqref{form.ess.spec.H}, 
whereas
\begin{equation}\label{ess.spec.relation}
	\sd(H_{\alpha}) \subset 
		\sd(\cA^{(+)}_{\alpha}) \cup \sd(\cA^{(-)}_{\alpha}).
\end{equation}
The claims on the discrete spectrum immediately follow from 
Theorems~\ref{thm:disc.spec.1} and \ref{thm:disc.spec.2} 
applied for the operator matrices $\cA^{(\pm)}_{\alpha}$.
\end{proof}
\begin{remark}
\begin{enumerate}[\upshape (i)]
\item It is easy to check that none of $2\omega_0\pm\eps$ 
can ever be the bottom of the essential spectrum of $H_{\alpha}$. 
Hence, discrete eigenvalues 
of $H_{\alpha}$ can never accumulate to the bottom of the 
essential spectrum of $H_{\alpha}$ from the left. The top of 
the essential spectrum of $H_{\alpha}$ can never be 
$2\omega_1\pm\eps$ whenever \eqref{strong.ultrared.regularity.M} 
is not satisfied. Consequently, discrete eigenvalues of 
$H_{\alpha}$ can never accumulate to the top of the essential 
spectrum of $H_{\alpha}$ from the right whenever 
\eqref{strong.ultrared.regularity.M} is violated.
\item It follows from Corollary~\ref{cor.disc.spec} that 
the discrete
spectrum of $H_\alpha$ is always finite whenever the infrared 
regularity 
conditions \eqref{infrared.reg.conditions} are satisfied. 
We believe that, at least for certain values of the spatial 
dimension $d\geq1$, conditions of type 
\eqref{infrared.reg.conditions}-\eqref{infrared.reg.type.conditions} 
are necessary to guarantee the finiteness of the discrete 
spectrum near the corresponding edges of the essential spectrum. 
In fact, the violation of a infrared regularity condition is 
equivalent to the fact that the corresponding edge of the 
essential spectrum of the two-boson system is a resonance 
state and in this case we expect an effect analogous to the 
Efimov effect from the spectral theory of the standard 
three-body Schr\"odinger operators, \cf~\cite{Jafaev-1974, Sobolev-CMP93, Tamura-1991, 
	Albeverio-Lakaev-Muminov-2004}.
\end{enumerate}
\end{remark}
\subsection*{Acknowledgments}
Most of the results in this paper were obtained during my stay 
at University College London (UCL). I am grateful to 
Prof.~A.~V.~Sobolev for fruitful discussions and I thank the 
Department of Mathematics at UCL for the kind hospitality. 
I would also like to thank Prof.~H.~Spohn for stimulating 
personal communications on the subject of this paper. 
The financial support of the 
\emph{Swiss National Science Foundation} through the Early 
Postdoc.Mobility grant No.~$168723$ is gratefully acknowledged.
%
%
{\small
	\bibliographystyle{acm}
	\bibliography{spin_boson_literature_20190131}

\def\cprime{$'$}
\begin{thebibliography}{10}

\bibitem{Abdesselam-2011}
{\sc Abdesselam, A.}
\newblock The ground state energy of the massless spin-boson model.
\newblock {\em Ann. Henri Poincar\'e 12}, 7 (2011), 1321--1347.

\bibitem{Abdullaev-Ikromov-2007}
{\sc Abdullaev, Z.~I., and Ikromov, I.~A.}
\newblock Finiteness of the number of eigenvalues of the two-particle
  {S}chr\"odinger operator on a lattice.
\newblock {\em Teoret. Mat. Fiz. 152}, 3 (2007), 502--517.

\bibitem{AMS-1998}
{\sc Adamyan, V., Mennicken, R., and Saurer, J.}
\newblock On the discrete spectrum of some selfadjoint operator matrices.
\newblock {\em J. Operator Theory 39}, 1 (1998), 3--41.

\bibitem{Albeverio-Lakaev-Muminov-2004}
{\sc Albeverio, S., Lakaev, S.~N., and Muminov, Z.~I.}
\newblock Schr\"{o}dinger operators on lattices. {T}he {E}fimov effect and
  discrete spectrum asymptotics.
\newblock {\em Ann. Henri Poincar\'{e} 5}, 4 (2004), 743--772.

\bibitem{Angelescu-Minlos-Ruiz-Zagrebnov-2008}
{\sc Angelescu, N., Minlos, R.~A., Ruiz, J., and Zagrebnov, V.~A.}
\newblock Lower spectral branches of a spin-boson model.
\newblock {\em J. Math. Phys. 49}, 10 (2008), 102105, 29.

\bibitem{Arai-2000}
{\sc Arai, A.}
\newblock Essential spectrum of a self-adjoint operator on an abstract
  {H}ilbert space of {F}ock type and applications to quantum field
  {H}amiltonians.
\newblock {\em J. Math. Anal. Appl. 246}, 1 (2000), 189--216.

\bibitem{Arai-Hirokawa-1997}
{\sc Arai, A., and Hirokawa, M.}
\newblock On the existence and uniqueness of ground states of a generalized
  spin-boson model.
\newblock {\em J. Funct. Anal. 151}, 2 (1997), 455--503.

\bibitem{ALMS94}
{\sc Atkinson, F.~V., Langer, H., Mennicken, R., and Shkalikov, A.~A.}
\newblock The essential spectrum of some matrix operators.
\newblock {\em Math. Nachr. 167\/} (1994), 5--20.

\bibitem{Bach-Ballesteros-Koenenberg-Menrath-2017}
{\sc Bach, V., Ballesteros, M., K\"onenberg, M., and Menrath, L.}
\newblock Existence of ground state eigenvalues for the spin-boson model with
  critical infrared divergence and multiscale analysis.
\newblock {\em J. Math. Anal. Appl. 453}, 2 (2017), 773--797.

\bibitem{Bach-Froechlich-Sigal-Adv.Math1998}
{\sc Bach, V., Fr\"{o}hlich, J., and Sigal, I.~M.}
\newblock Quantum electrodynamics of confined nonrelativistic particles.
\newblock {\em Adv. Math. 137}, 2 (1998), 299--395.

\bibitem{Birman-Solomjak-87b}
{\sc Birman, M.~S., and Solomjak, M.~Z.}
\newblock {\em Spectral theory of selfadjoint operators in {H}ilbert space}.
\newblock Mathematics and its Applications (Soviet Series). D. Reidel
  Publishing Co., Dordrecht, 1987.
\newblock Translated from the 1980 Russian original by S. Khrushch{\"e}v and V.
  Peller.

\bibitem{deMonvel-Sahbani-1998}
{\sc Boutet~de Monvel, A., and Sahbani, J.}
\newblock On the spectral properties of the spin-boson {H}amiltonians.
\newblock {\em Lett. Math. Phys. 44}, 1 (1998), 23--33.

\bibitem{Braunlich-Hasler-Lange-2018}
{\sc Br\"aunlich, G., Hasler, D., and Lange, M.}
\newblock On {A}symptotic {E}xpansions in {S}pin--{B}oson {M}odels.
\newblock {\em Ann. Henri Poincar\'e 19}, 2 (2018), 515--564.

\bibitem{Davies1981_SymmetryBreaking}
{\sc Davies, E.~B.}
\newblock Symmetry breaking for molecular open systems.
\newblock {\em Ann. Inst. H. Poincar\'e Sect. A (N.S.) 35}, 2 (1981), 149--171.

\bibitem{DeRoeck-Griesemer-Kupianinen-2015}
{\sc De~Roeck, W., Griesemer, M., and Kupiainen, A.}
\newblock Asymptotic completeness for the massless spin-boson model.
\newblock {\em Adv. Math. 268\/} (2015), 62--84.

\bibitem{DerJak-JFA2001}
{\sc Derezi\'{n}ski, J., and Jak\v{s}i\'{c}, V.}
\newblock Spectral theory of {P}auli-{F}ierz operators.
\newblock {\em J. Funct. Anal. 180}, 2 (2001), 243--327.

\bibitem{Duclos-Exner-1995}
{\sc Duclos, P., and Exner, P.}
\newblock Curvature-induced bound states in quantum waveguides in two and three
  dimensions.
\newblock {\em Rev. Math. Phys. 7}, 1 (1995), 73--102.

\bibitem{H.Froehlich1954}
{\sc Fr\"ohlich, H.}
\newblock Electrons in lattice fields.
\newblock {\em Adv. in Phys. 3\/} (1954), 325--362.

\bibitem{Galtbayar-Jensen-Yajima-2003}
{\sc Galtbayar, A., Jensen, A., and Yajima, K.}
\newblock The {N}elson model with less than two photons.
\newblock {\em Ann. Henri Poincar\'{e} 4}, 2 (2003), 239--273.

\bibitem{Gerard-1996}
{\sc G{\'e}rard, C.}
\newblock Asymptotic completeness for the spin-boson model with a particle
  number cutoff.
\newblock {\em Rev. Math. Phys. 8}, 4 (1996), 549--589.

\bibitem{Hasler-Herbst-2011}
{\sc Hasler, D., and Herbst, I.}
\newblock Ground states in the spin boson model.
\newblock {\em Ann. Henri Poincar\'e 12}, 4 (2011), 621--677.

\bibitem{Hirokawa-2001RMP}
{\sc Hirokawa, M.}
\newblock Remarks on the ground state energy of the spin-boson model. {A}n
  application of the {W}igner-{W}eisskopf model.
\newblock {\em Rev. Math. Phys. 13}, 2 (2001), 221--251.

\bibitem{Huebner-Spohn-1995}
{\sc H{\"u}bner, M., and Spohn, H.}
\newblock Spectral properties of the spin-boson {H}amiltonian.
\newblock {\em Ann. Inst. H. Poincar\'e Phys. Th\'eor. 62}, 3 (1995), 289--323.

\bibitem{Ibrogimov-Froehlich.polaron.2018}
{\sc Ibrogimov, O.~O.}
\newblock On the spectrum of a tridiagonal operator matrix and its application
  to radioactive decay.
\newblock Preprint, Dec. 2018.

\bibitem{Ibrogimov-AHP2018}
{\sc Ibrogimov, O.~O.}
\newblock Spectral {A}nalysis of the {S}pin-{B}oson {H}amiltonian with {T}wo
  {P}hotons for {A}rbitrary {C}oupling.
\newblock {\em Ann. Henri Poincar\'{e} 19}, 11 (2018), 3561--3579.

\bibitem{Ibrogimov-Tretter-OTAA2018}
{\sc Ibrogimov, O.~O., and Tretter, C.}
\newblock On the spectrum of an operator in truncated {F}ock space.
\newblock {\em Operator Theory: Advances and Applications 263\/} (2018),
  321--334.

\bibitem{Ikromov-Sharipov-FAA-1998}
{\sc Ikromov, I.~A., and Sharipov, F.}
\newblock On the discrete spectrum of the nonanalytic matrix-valued
  {F}riedrichs model.
\newblock {\em Funktsional. Anal. i Prilozhen. 32}, 1 (1998), 63--65.

\bibitem{Jafaev-1974}
{\sc Jafaev, D.~R.}
\newblock On the theory of the discrete spectrum of the three-particle
  {S}chr\"{o}dinger operator.
\newblock {\em Mat. Sb. (N.S.) 94(136)\/} (1974), 567--593, 655--656.

\bibitem{Kato}
{\sc Kato, T.}
\newblock {\em Perturbation theory for linear operators}.
\newblock Springer-Verlag, Berlin, 1995.
\newblock Reprint of the 1980 edition.

\bibitem{Klaus-1977}
{\sc Klaus, M.}
\newblock On the bound state of {S}chr\"{o}dinger operators in one dimension.
\newblock {\em Ann. Physics 108}, 2 (1977), 288--300.

\bibitem{Merkli-CMP2015}
{\sc K\"{o}nenberg, M., Merkli, M., and Song, H.}
\newblock Ergodicity of the spin-boson model for arbitrary coupling strength.
\newblock {\em Comm. Math. Phys. 336}, 1 (2015), 261--285.

\bibitem{Kraus-Langer-Tretter-2004}
{\sc Kraus, M., Langer, M., and Tretter, C.}
\newblock Variational principles and eigenvalue estimates for unbounded block
  operator matrices and applications.
\newblock {\em J. Comput. Appl. Math. 171}, 1--2 (2004), 311--334.

\bibitem{Lakaev-Muminov-2003}
{\sc Lakaev, S.~N., and Muminov, M.~E.}
\newblock Essential and discrete spectrum of the three-particle
  {S}chr\"{o}dinger operator on a lattice.
\newblock {\em Teoret. Mat. Fiz. 135}, 3 (2003), 478--503.

\bibitem{Langer-Langer-Tretter-2002}
{\sc Langer, H., Langer, M., and Tretter, C.}
\newblock Variational principles for eigenvalues of block operator matrices.
\newblock {\em Indiana University Mathematics Journal 51}, 6 (2002),
  1427--1460.

\bibitem{MR2363469}
{\sc Marletta, M., and Tretter, C.}
\newblock Essential spectra of coupled systems of differential equations and
  applications in hydrodynamics.
\newblock {\em J. Differential Equations 243}, 1 (2007), 36--69.

\bibitem{Minlos-Spohn-1996}
{\sc Minlos, R., and Spohn, H.}
\newblock The three-body problem in radioactive decay: the case of one atom and
  at most two photons.
\newblock In {\em Topics in statistical and theoretical physics}, vol.~177 of
  {\em Amer. Math. Soc. Transl. Ser. 2}. Amer. Math. Soc., Providence, RI,
  1996, pp.~159--193.

\bibitem{Miyao-2009}
{\sc Miyao, T.}
\newblock Polaron with at most one phonon in the weak coupling limit.
\newblock {\em Monatsh. Math. 157}, 4 (2009), 365--378.

\bibitem{Moeller-2005}
{\sc M{\o}ller, J.~S.}
\newblock The translation invariant massive {N}elson model. {I}. {T}he bottom
  of the spectrum.
\newblock {\em Ann. Henri Poincar\'{e} 6}, 6 (2005), 1091--1135.

\bibitem{Moller-Rasmussen-2013}
{\sc M{\o}ller, J.~S., and Rasmussen, M.~G.}
\newblock The translation invariant massive {N}elson model: {II}. {T}he
  continuous spectrum below the two-boson threshold.
\newblock {\em Ann. Henri Poincar\'{e} 14}, 4 (2013), 793--852.

\bibitem{MNR-2016-1D}
{\sc Muminov, M., Neidhardt, H., and Rasulov, T.}
\newblock On the spectrum of the lattice spin-boson {H}amiltonian for any
  coupling: 1{D} case.
\newblock {\em J. Math. Phys. 56}, 5 (2015), 053507, 24.

\bibitem{Newton-1983}
{\sc Newton, R.~G.}
\newblock Bounds on the number of bound states for the {S}chr\"{o}dinger
  equation in one and two dimensions.
\newblock {\em J. Operator Theory 10}, 1 (1983), 119--125.

\bibitem{Rasulov-2016TMF}
{\sc Rasulov, T.~K.}
\newblock On branches of the essential spectrum of the lattice spin-boson model
  with at most two photons.
\newblock {\em Teoret. Mat. Fiz. 186}, 2 (2016), 293--310.

\bibitem{Seto-1974}
{\sc Set\^{o}, N.}
\newblock Bargmann's inequalities in spaces of arbitrary dimension.
\newblock {\em Publ. Res. Inst. Math. Sci. 9\/} (1973/74), 429--461.

\bibitem{Skibsted-1998}
{\sc Skibsted, E.}
\newblock Spectral analysis of {$N$}-body systems coupled to a bosonic field.
\newblock {\em Rev. Math. Phys. 10}, 7 (1998), 989--1026.

\bibitem{Sobolev-CMP93}
{\sc Sobolev, A.~V.}
\newblock The {E}fimov effect. {D}iscrete spectrum asymptotics.
\newblock {\em Comm. Math. Phys. 156}, 1 (1993), 101--126.

\bibitem{Spohn1988:large.polaron}
{\sc Spohn, H.}
\newblock The polaron at large total momentum.
\newblock {\em J. Phys. A 21}, 5 (1988), 1199--1211.

\bibitem{Spohn-CMP-1989}
{\sc Spohn, H.}
\newblock Ground state(s) of the spin-boson {H}amiltonian.
\newblock {\em Comm. Math. Phys. 123}, 2 (1989), 277--304.

\bibitem{Tamura-1991}
{\sc Tamura, H.}
\newblock The {E}fimov effect of three-body {S}chr\"{o}dinger operators.
\newblock {\em J. Funct. Anal. 95}, 2 (1991), 433--459.

\bibitem{Tre08}
{\sc Tretter, C.}
\newblock {\em Spectral theory of block operator matrices and applications}.
\newblock Imperial College Press, London, 2008.

\bibitem{Zhukov-Milnos-1995}
{\sc Zhukov, Y.~V., and Minlos, R.~A.}
\newblock The spectrum and scattering in the ``spin-boson'' model with at most
  three photons.
\newblock {\em Teoret. Mat. Fiz. 103}, no.~1 (1995), 63--81.

\end{thebibliography}
}
\end{document}